\newtheorem{theorem}{Theorem}[section]
\newtheorem{proposition}[theorem]{Proposition}
\newtheorem{lemma}[theorem]{Lemma}
\newtheorem{definition}[theorem]{Definition}
\newtheorem{remark}[theorem]{Remark}
\newtheorem{thm}[theorem]{Theorem}
\newcommand{\si}{\sigma}
\newcommand{\OH}{\mathcal O}
\title{The Classification of Regular Surfaces Isogenous to a Product of Curves with $\chi(\OH_S)= 2$}
\date{}
\author{Christian Glei\ss ner}
\begin{document}

\maketitle

\bigskip

\section{Introduction}

\bigskip

A complex surface  $S$ is said to be \emph{isogenous to a product} if $S$ is a quotient 
\[
S=(C_1 \times C_2)/G,
\]
where the $C_i$'s  are curves of genus
at least two, and $G$ is a finite group acting freely on $C_1 \times C_2$. 
Due to Catanese \cite{cat} there are two possibilities how $G$ can act on the product $ C_1 \times C_2 $:
\begin{itemize}
\item For all $g \in G$ and $(z,w) \in C_1 \times C_2$ we have $g(z,w)=(g(z),g(w))$.  
In this case the action is called \emph{diagonal}.
\item There exists  $g \in G $, such that  $g(z,w)=(g(w),g(z))$
for all  $(z,w) \in C_1 \times C_2$. In this case the curves  $C_1$ and $C_2$ are isomorphic.
\end{itemize} 
\noindent
If the action of $G$ is diagonal,  we say  that $S$ is of \emph{unmixed type}, else we say that $S$ is of  \emph{mixed type}. \\
Since these surfaces were introduced by Catanese \cite{cat} there has been produced a considerable amount of literature. 
In particular the surfaces isogenous to a product with $\chi(\OH_S)= 1$ are completely classified:
The Bogomolov-Miyaoka-Yau inequality $K_S^2 \leq 9 \chi(\mathcal O_S)$ together with 
Debarre's inequality $K^2_S\geq 2p_g(S) $ gives $0\leq q(S) = p_g(S) \leq 4$.
By Beauville \cite{Be82} all minimal surfaces $S$ of general type with $p_g(S)=q(S)=4$  are a product of two genus two curves. A minimal surface $S$ of general type with $p_g(S)=q(S)=3$ is either the symmetric square of a genus three curve or 
$S=(F_2 \times F_3)/\tau$, where $F_g$ is a curve of genus $g$ and $\tau$  is of order two  acting on $F_2$ as an elliptic involution and on 
$F_3$ as a fixed point free involution \cite{CCML98, Pir02, HP02}. The classifications of surfaces isogenous to a product 
in the remaining cases are: 
the case $p_g=0$, $q=0$, due to  Bauer, Catanese, Grunewald \cite{bauer}, $p_g=1$, $q=1$, due to Carnovale, Polizzi \cite{polizzi}
and 
$p_g=2$, $q=2$, due to Penegini  \cite{pen10}.

\bigskip
Our aim is to give a classification in the case $\chi(\OH_S)= 2$ under the assumption that $S$ regular and of unmixed type.
We want to mention that these surfaces have the invariants $q(S)=0$ and $p_g(S)=1$  like  $K3$ surfaces and there are recent 
constructions of $K3$ surfaces with non-symplectic automorphisms as \emph{product quotient surfaces} by 
Garbagnati and Penegini \cite{GP13}. Our main result is the following (see also the table in \ref{Haupterg}):

\begin{thm}\label{families} 
There are exactly $49$ families of regular surfaces isogenous to a product of unmixed type with  $\chi(\OH_S)= 2$. 
\end{thm}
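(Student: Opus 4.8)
The plan is to translate the geometric classification into a purely group-theoretic and combinatorial enumeration, and then to carry that enumeration out with the structure theory of Galois covers of $\mathbb{P}^1$. Since $G$ acts diagonally, it acts on each factor $C_i$, and because $S$ is regular we have $q(S) = g(C_1/G) + g(C_2/G) = 0$, so both quotients $C_i/G$ are isomorphic to $\mathbb{P}^1$. By the Riemann existence theorem each cover $C_i \to \mathbb{P}^1$ is encoded by a spherical system of generators $V_i = (g_{i,1}, \dots, g_{i,r_i})$ of $G$, i.e.\ a generating tuple with $\prod_j g_{i,j} = 1$, whose orders $m_{i,j} = \mathrm{ord}(g_{i,j})$ form the signature $T_i$. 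Writing $\Theta_i = -2 + \sum_{j=1}^{r_i}(1 - 1/m_{i,j})$, the Riemann--Hurwitz formula gives $2g(C_i) - 2 = |G|\,\Theta_i$. The first step is therefore to record the two numerical constraints: from $\chi(\OH_S) = \chi(\OH_{C_1\times C_2})/|G|$ we obtain $(g(C_1)-1)(g(C_2)-1) = 2|G|$, and substituting $g(C_i)-1 = |G|\Theta_i/2$ yields the crucial identity
\begin{equation*}
|G|\,\Theta_1\,\Theta_2 = 8 .
\end{equation*}

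Second, I would encode freeness of the $G$-action on the product. A nontrivial $g \in G$ has a fixed point on $C_i$ exactly when it is conjugate to a power of some $g_{i,j}$; writing $\Sigma_i = \bigcup_{g\in G}\bigcup_j g\langle g_{i,j}\rangle g^{-1}$ for this set of stabilizing elements, the action on $C_1 \times C_2$ is free if and only if $\Sigma_1 \cap \Sigma_2 = \{1\}$. The classification thus reduces to listing data $(G, T_1, T_2, (V_1,V_2))$ with $V_i$ of signature $T_i$ satisfying the identity above, each $\Theta_i > 0$ (equivalently $g(C_i) \geq 2$), and $\Sigma_1 \cap \Sigma_2 = \{1\}$, up to the natural equivalence generated by $\mathrm{Aut}(G)$, the Hurwitz braid moves on each $V_i$, and the exchange of the two factors. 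A family corresponds precisely to one such equivalence class, so it remains to show there are exactly $49$ of them.

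Third comes the finiteness reduction. Since each orbifold over $\mathbb{P}^1$ is hyperbolic, the Hurwitz bound gives $\Theta_i \geq 1/42$, and combined with $|G|\,\Theta_1\Theta_2 = 8$ this forces $|G| \leq 8\cdot 42^2$, bounding simultaneously the group order, the number of branch points $r_i$, and the orders $m_{i,j}$ (each $m_{i,j}$ divides $|G|$, subject to $\Theta_i > 0$). I would sharpen these crude bounds by a case analysis on the pair of signatures: the identity pins $|G| = 8/(\Theta_1\Theta_2)$ to a finite list of admissible integers, and divisibility and parity conditions (the integrality of $g(C_i) = 1 + |G|\Theta_i/2$, and $\mathrm{lcm}(m_{i,j}) \mid |G|$) eliminate most combinations. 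Freeness is a further strong filter: since $\Sigma_i$ is a union of conjugacy classes, the condition $\Sigma_1 \cap \Sigma_2 = \{1\}$ forces the element-orders appearing in the two signatures to be largely disjoint and interacts tightly with the Sylow structure, ruling out many groups outright.

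Finally, for each surviving candidate $(|G|, T_1, T_2)$ I would run through the finitely many isomorphism types of groups of that order --- in practice via the small groups library in a computer algebra system --- enumerate all spherical generating systems of the prescribed signatures, discard those violating $\Sigma_1 \cap \Sigma_2 = \{1\}$, and collapse the survivors under $\mathrm{Aut}(G)$, Hurwitz moves, and factor exchange to count equivalence classes. The main obstacle I anticipate is twofold: obtaining bounds sharp enough that the search is genuinely finite and computationally tractable, and implementing the equivalence relation faithfully so that distinct families are neither merged nor double-counted. It is the delicate interplay between the arithmetic identity $|G|\,\Theta_1\Theta_2 = 8$ and the freeness constraint that ultimately pins the count down to exactly $49$.
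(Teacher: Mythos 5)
Your skeleton is the same as the paper's: translate via Riemann's existence theorem into spherical systems of generators, derive the numerical identity (your $|G|\,\Theta_1\Theta_2=8$ is exactly the paper's $2|G|=(g(C_1)-1)(g(C_2)-1)$ combined with Riemann--Hurwitz), encode freeness as $\Sigma_1\cap\Sigma_2=\{1_G\}$, bound $|G|\le 8\cdot 42^2=14112$, and count orbits under $\mathrm{Aut}(G)$, Hurwitz moves and factor exchange. However, there are two concrete gaps that would prevent your plan from terminating in the count of $49$. First, you omit the strongest numerical filter, which the paper derives from freeness \emph{before} any group search: if $g_{1,j}$ is a generator in the first system, then $\langle g_{1,j}\rangle$ has a fixed point on $C_1$, hence acts freely on $C_2$, so $C_2\to C_2/\langle g_{1,j}\rangle$ is unramified and Riemann--Hurwitz forces $m_{1,j}\mid g(C_2)-1 = 4/\Theta_1$ (and symmetrically for $T_2$). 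Your substitutes --- integrality of $g(C_i)$ and $\mathrm{lcm}(m_{i,j})\mid |G|$ --- are strictly weaker: for instance $[2,3,15]$ has $4/\Theta=40\in\mathbb N$ and passes your tests, but fails $15\mid 40$ and is correctly excluded by the paper. This divisibility condition is what cuts the admissible signatures to an explicit finite list of $53$ types, from which the finite list of candidate triples $(|G|,T_1,T_2)$ is generated; without it your search space of signatures and group orders balloons.

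Second, and more decisively, your final step asserts the enumeration can be carried out ``via the small groups library,'' but no such library covers the range your bound allows: MAGMA's database of small groups stops at order $2000$ (and excludes $1024$), while your bound permits orders up to $14112$; moreover, for orders in $\{256,384,512,768,1152,1920\}$ there are more than $12$ million groups, so brute-force search is infeasible even where the database exists. The paper closes exactly this hole with ideas absent from your proposal: for $|G|\ge 2001$ it searches only the database of \emph{perfect} groups and then kills the non-perfect possibilities by hand, descending through abelianizations and commutator subgroups of the polygonal groups (e.g.\ for $|G|=4608$, $T_1=T_2=[2,3,8]$: $\mathbb T(2,3,8)'\simeq\mathbb T(3,3,4)$, $\mathbb T(3,3,4)'\simeq\mathbb T(4,4,4)$, Burnside's $p^aq^b$-theorem gives solvability, and no group of order $768$ is a quotient of $\mathbb T(4,4,4)$); for the orders with millions of groups it first filters by the possible abelianizations $G^{ab}$, and it handles memory-critical cases like $T_1=[2^8]$ with $|G|\in\{48,96,168\}$ by conjugacy-class arguments and restricted Hurwitz orbits. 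Without these ingredients the classification cannot be completed, so your proposal, while pointed in the right direction, does not yet constitute a proof.
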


We will now explain how the paper is organized. 
In section \ref{section2} we explain the basics about surfaces isogenous to a product of curves. 
In section \ref{section3} we recall \emph{Riemann's existence theorem} and introduce
the necessary tools from group theory and combinatorics. These facts are used to show that there is an entirely group theoretic description of surfaces isogenous to a product. In section \ref{section4} we recall the description of the moduli space of surfaces isogenous to a product, due to Catanese.  
In section \ref{section5} we give an algorithm, which we use to classify all regular surfaces isogenous to a product of 
unmixed type with $\chi(\OH_S)=2$. 
The computations are performed with the computer algebra system MAGMA \cite{magma}. \footnote{The source code is available at
\url{http://www.staff.uni-bayreuth.de/~ bt300503/.}}
In particular the \emph{Database of Small Groups} and the \emph{Database of Perfect Groups} is used. Afterwards we discuss the output of the 
computation. Finally we show the classification result.  

\bigskip

\textbf{Acknowledgments:} The autor thanks Ingrid Bauer and Sascha Weigl for several
suggestions, useful discussions and very careful reading of the paper.

\bigskip

\section{Surfaces Isogenous to a Product}\label{section2}

\bigskip
In this section we explain some basic facts about surfaces isogenous to a product. 
We work over the field of complex numbers $\mathbb C$ and use the standard notations from the theory of complex surfaces,
see for example \cite{Be83}.
The \emph{self-intersection of the  canonical class} is denoted by  $K_S^2$, the \emph{topological Euler number} by 
$e(S)=\sum_{i=1}^{4} (-1)^{i} h^i(S,\mathbb C)$, $\kappa(S)$ is the Kodaira dimension. The \emph{holomorphic-Euler-Poincar\'e-characteristic} is defined as $\chi(\OH_S):= 1 - q(S) + p_g(S)$, where $q(S):=h^1(S,\OH_S)$ and $p_g(S):=h^2(S,\OH_S)$. 
The basic objects we consider are the following:

\newpage

\begin{definition}

A surface  $S$ is said to be \emph{isogenous to a product} if $S$ is a quotient 
\[
S=(C_1 \times C_2)/G,
\]
where the $C_i$'s  are smooth projective curves of genus
at least two, and \\ $G \leq Aut(C_1 \times C_2)$  is a finite group of automorphisms acting \emph{freely} on $C_1 \times C_2$.
\end{definition}

\bigskip
Immediate consequences of this definition are:  
The surface $S$ is smooth, projective and of general type, i.e. $\kappa(S)=2$.
The canonical class $K_S$ is ample. In particular $S$ is minimal.

\bigskip
The \emph{self-intersection of the  canonical class} $K_S^2$, the \emph{topological Euler number} and the 
\emph{holomorphic-Euler-Poincar\'e-characteristic} can be expressed in terms of the genera $g(C_i)$:  
\bigskip

\begin{proposition}\cite[Theorem 3.4]{cat} 
Let $S = (C_1 \times C_2)/G $ be a surface isogenous to a product, then 
\[
K_S^2=\frac{8(g(C_1) -1)(g(C_2) -1)}{|G|}, ~~~ 
e(S)= \frac{1}{2} K_S^2 ~~~  \makebox{and} ~~~ \chi(\OH_S)= \frac{1}{8} K_S^2. 
\]
\end{proposition}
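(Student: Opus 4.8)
The plan is to compute the three invariants directly from the quotient structure $S = (C_1 \times C_2)/G$, exploiting the fact that $G$ acts freely. Since the action is free, the quotient map $\pi \colon C_1 \times C_2 \to S$ is an \'etale covering of degree $|G|$, and this is the central tool: multiplicative invariants of the product pull back to $|G|$ times the corresponding invariant downstairs.

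First I would treat $K_S^2$. For any unramified covering $\pi$ of degree $d = |G|$, the canonical bundle satisfies $\pi^* K_S = K_{C_1 \times C_2}$ (there is no ramification divisor), so $K_S^2 = \frac{1}{d} (\pi^* K_S)^2 = \frac{1}{d} K_{C_1 \times C_2}^2$. By the adjunction/K\"unneth formula on a product of curves one has $K_{C_1 \times C_2} = p_1^* K_{C_1} + p_2^* K_{C_2}$, whence $K_{C_1 \times C_2}^2 = 2 (K_{C_1} \cdot K_{C_2})_{\mathrm{product}} = 2 \deg K_{C_1} \cdot \deg K_{C_2} = 2 \cdot (2g(C_1)-2)(2g(C_2)-2) = 8(g(C_1)-1)(g(C_2)-1)$. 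Dividing by $|G|$ gives the stated formula for $K_S^2$.

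Next I would handle the topological Euler number. Again using that $\pi$ is \'etale of degree $|G|$, the Euler characteristic is multiplicative: $e(C_1 \times C_2) = |G| \cdot e(S)$. The Euler number of a product is the product of the Euler numbers, $e(C_1 \times C_2) = e(C_1) e(C_2) = (2 - 2g(C_1))(2 - 2g(C_2)) = 4(g(C_1)-1)(g(C_2)-1)$, so $e(S) = \frac{4(g(C_1)-1)(g(C_2)-1)}{|G|} = \tfrac{1}{2} K_S^2$. Finally, for $\chi(\OH_S)$ I would invoke Noether's formula $\chi(\OH_S) = \frac{1}{12}(K_S^2 + e(S))$; substituting $e(S) = \tfrac12 K_S^2$ yields $\chi(\OH_S) = \frac{1}{12} \cdot \frac{3}{2} K_S^2 = \tfrac{1}{8} K_S^2$.

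I do not anticipate a serious obstacle here, since the freeness of the action does all the work by making $\pi$ unramified; the only point requiring mild care is to confirm $\pi^* K_S = K_{C_1 \times C_2}$ with no correction term, which is precisely where the hypothesis that $G$ acts freely (rather than merely with finite stabilizers) is essential. Everything else is the multiplicativity of $e$ and $K^2$ under \'etale maps together with Noether's formula, all standard for smooth projective surfaces.
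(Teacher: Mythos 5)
Your proof is correct. Note that the paper itself offers no proof of this proposition: it is quoted directly from Catanese \cite[Theorem 3.4]{cat}, so there is no internal argument to compare against. Your argument --- \'etaleness of $\pi$ from freeness of the action, $\pi^* K_S = K_{C_1\times C_2}$ with multiplicativity of $K^2$ and of the topological Euler number under unramified covers, K\"unneth/adjunction on the product (where the cross-term computation implicitly uses $(p_i^* K_{C_i})^2 = 0$, which holds since these are pullbacks of divisors from curves), and finally Noether's formula --- is exactly the standard route to this result and fills in the citation completely.
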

In our case $p_g(S)=1$ and $q(S)=0$ we get  $K_S^2=16$ and $e(S)=8$, moreover we have the useful relation
\begin{gather}
2 |G| = (g(C_1) -1)(g(C_2) -1).
\label{1}
\end{gather} 
\bigskip

\begin{remark}
For the rest of the paper we consider the \emph{unmixed} case, where the action of $G$ on the product 
$C_1 \times C_2$ is diagonal, i.e. 
\[
G = G \cap (Aut(C_1) \times Aut(C_2)). 
\]
\end{remark}

Since we consider unmixed actions only, we obtain two Galois coverings  
\[
f_1: C_1 \to C_1/G, ~~~ f_2: C_2 \to C_2/G.  
\]
By \cite[Proposition 3.13]{cat} one can assume without loss of generality, that $G$ acts faithfully on $C_1$ and on $C_2$. 
We want to relate the invariants $p_g(S)$ and $q(S)$ with the genera of the curves. 
To do this, we need the following theorem. 

\bigskip

\begin{theorem}
Let $X$ be a smooth projective variety  and $G$ be a finite group acting faithfully on $X$. 
If $Y=X/G$ is smooth, then 
\[
H^0(Y,\Omega_Y^p) \simeq H^0(X,\Omega_X^p)^G. 
\]
\end{theorem}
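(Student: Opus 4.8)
The natural map to consider is the pullback along the quotient morphism $\pi \colon X \to Y = X/G$. The plan is to show that $\pi^*$ identifies $H^0(Y,\Omega_Y^p)$ with the subspace of $G$-invariant forms on $X$. First I would observe that $\pi$ is a finite surjective morphism of smooth projective varieties satisfying $\pi \circ g = \pi$ for all $g \in G$; hence for $\omega \in H^0(Y,\Omega_Y^p)$ the pullback obeys $g^* \pi^* \omega = \pi^* \omega$, so $\pi^*$ indeed lands in $H^0(X,\Omega_X^p)^G$. Working in characteristic zero, $\pi$ is generically \'etale: write $R \subset X$ for the ramification locus, $B = \pi(R) \subset Y$ for the branch locus, and $X^\circ = X \setminus R \to Y^\circ = Y \setminus B$ for the induced \'etale covering. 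Over $Y^\circ$ the map $\pi$ is an analytic local isomorphism, so $\pi^*$ is injective there, and since $Y^\circ$ is dense and a holomorphic form is determined by its restriction to a dense open set, $\pi^*$ is injective on all of $Y$.

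For surjectivity I would take $\eta \in H^0(X,\Omega_X^p)^G$ and descend it over the unramified locus. On a small ball $V \subset Y^\circ$ choose an analytic section $s$ of $\pi$ and set $\omega|_V := s^*\eta$. Because $\eta$ is $G$-invariant and any two local sections of $\pi$ differ by an element of $G$, this definition is independent of the chosen sheet, so the local pieces glue to a holomorphic form $\omega$ on $Y^\circ$ with $\pi^*\omega = \eta$ on $X^\circ$. It then remains to extend $\omega$ holomorphically across $B$. I would split $B = B_1 \cup B'$ into its divisorial part $B_1$ and the part $B'$ of codimension $\geq 2$. Across $B'$ the extension is automatic: in local coordinates on the smooth variety $Y$ the coefficients of $\omega$ are holomorphic off a set of codimension $\geq 2$, hence extend by the Riemann--Hartogs removable-singularity theorem.

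The genuine content, and the step I expect to be the main obstacle, is the extension across the divisorial branch locus $B_1$, where $\pi$ is honestly ramified and naive inversion of the pullback introduces vanishing factors that could produce poles. The plan is to reduce to a local normal form. Since $Y$ is \emph{smooth}, the Chevalley--Shephard--Todd theorem (in its analytic/formal version) applies: at each point the image of the stabilizer in $GL(T_xX)$ is generated by pseudo-reflections, so after linearizing, the quotient is locally modeled on $\mu_m$ acting on $\mathbb{C}^n$ by $z_1 \mapsto \zeta_m z_1$ and $z_i \mapsto z_i$ for $i \geq 2$, with quotient coordinates $w_1 = z_1^m$, $w_i = z_i$. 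In this model a monomial $G$-invariant $p$-form must have total $z_1$-weight divisible by $m$, and the elementary identities $z_1^{km-1}\,dz_1 = \tfrac{1}{m}\,w_1^{k-1}\,dw_1$ and $z_1^{km} = w_1^k$ show directly that every invariant form is the pullback of a form that is holomorphic downstairs. This yields local boundedness of the coefficients of $\omega$ near $B_1$, so $\omega$ extends by the removable-singularity theorem, completing surjectivity and hence the isomorphism. Equivalently, one can package the entire argument as the sheaf isomorphism $\Omega_Y^p \cong (\pi_*\Omega_X^p)^G$ on the smooth base, whose verification localizes to the same reflection-group computation; taking global sections and $G$-invariants (an exact operation in characteristic zero for the finite group $G$) then produces the stated identification.
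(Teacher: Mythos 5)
The paper itself does not prove this theorem: it states it and refers to Griffiths \cite{griffiths} for a proof. So your argument is necessarily independent of the paper's treatment, and what you give is the standard direct proof. Its skeleton is sound: $\pi^*$ lands in the invariants and is injective because $\pi$ is generically \'etale and a holomorphic form is determined on a dense open set; invariant forms descend over the unramified locus because $X^\circ \to Y^\circ$ is a Galois covering with deck group $G$; and the extension across the branch locus is split into a codimension-$\geq 2$ part (Riemann--Hartogs) and a divisorial part (a local computation), which is exactly where the content lies. Your monomial computation in the cyclic model is also correct.

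There is, however, one step that is false as stated, though it is repairable with tools you already use. Chevalley--Shephard--Todd gives that each linearized stabilizer is \emph{generated by} pseudo-reflections; it does not give that the stabilizer is the cyclic group generated by a \emph{single} pseudo-reflection, which is what your local model assumes ``at each point.'' At a point lying over a crossing of two components of the branch divisor the stabilizer can be $\mu_2 \times \mu_2$ acting by $(z_1,z_2) \mapsto (\pm z_1, \pm z_2)$ --- generated by two reflections but not cyclic --- and non-abelian reflection stabilizers such as $\mathfrak S_n$ permuting coordinates also occur. For such groups your identity $z_1^{km-1}\,dz_1 = \tfrac1m w_1^{k-1}\,dw_1$ does not literally apply; the statement that invariant forms descend for an \emph{arbitrary} finite reflection group is true but is Solomon's theorem, a genuinely stronger input. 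The repair inside your own framework: the cyclic model \emph{is} correct at a general point of each component of $B_1$, since the stabilizer of a general point of a ramification divisor fixes that divisor pointwise, hence acts trivially on its tangent space and faithfully on the one-dimensional normal space, so it embeds in $\mathbb C^*$ and is cyclic (note this needs no Chevalley--Shephard--Todd at all). Your computation then extends $\omega$ across a dense open subset of $B_1$, and what remains of $B$ --- the special points of $B_1$ together with $B'$ --- is a closed analytic subset of codimension $\geq 2$ in the smooth variety $Y$, so the Hartogs step finishes. One more ordering point: that Hartogs step must come \emph{last}, not first, because a point of $B'$ may lie in the closure of $B_1$, in which case $B$ is not of codimension $\geq 2$ near that point and the extension there is not ``automatic'' until $B_1$ has been dealt with. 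With the generic-point reduction and this reordering, your proof is complete.
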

A proof of this result can be found in \cite{griffiths}.

\bigskip
By K\"unneth's formula \cite[p.103-104]{GriffH}:
\[
H^0(C_1 \times C_2 ,\Omega_{C_1 \times C_2}^1)^G= H^0(C_1,\Omega_{C_1}^1)^G \oplus H^0(C_2,\Omega_{C_2}^1)^G.
\]
According to the previous theorem $q(S)=h^0(C_1 \times C_2 ,\Omega_{C_1 \times C_2}^1)^G$. Since $q(S)=0$ by assumption, we conclude that $g(C_i/G)=0$ for both $i=1,2$. Thus the holomorphic maps $f_i$ from above are Galois coverings of $\mathbb P_{\mathbb C}^1$.

\bigskip

\section{Group theory, Riemann surfaces and combinatorics}\label{section3}

\bigskip
\bigskip
To give a purely group theoretic description of surfaces isogenous to a product, 
we introduce the required notation from group theory and combinatorics and recall \emph{Riemann's existence theorem}.

\bigskip

\begin{definition}\label{condtypes}

Let  $ T = [m_1,...,m_r] \in \mathbb N^r $ be an $r$-tuple. We define 
\[
\Theta(T):= -2 + \sum_{i=1}^r \bigg( 1- \frac{1} {m_i} \bigg)
\]
and in case $\Theta(T) \ne 0$ 
\[
\alpha(T):= \frac{4} {\Theta(T)}.
\]
For $r \geq 3$ we denote by  $\mathcal N_r$ the set of all  r-tuples $[m_1,...,m_r]$ with the following properties:
\begin{itemize}
\item $ 2 \leq m_1 \leq ... \leq m_r $ 
\item $ \Theta([m_1, ... ,m_r]) > 0 $ 
\item $ \alpha([m_1, ... ,m_r]) \in \mathbb N $ 
\item $ m_i  \big\vert  \alpha([m_1, ... ,m_r]) ~~  \makebox{for all} ~~  1 \leq i \leq r $.  
\end{itemize}
The union of  all $\mathcal N_r$ is defined as 
$
\mathcal N := \bigcup_{r \geq 3}{\mathcal N_r}.
$
An element in $\mathcal N$ is called a type. A type $T$ contained in $\mathcal N_r$ is said to be of length $r$, we write $\it l(T)=r$. 
Moreover for a type $T=[m_1,...,m_r]$ we use the notation $T=[m_1,...,m_r]_{\alpha(T)}$.
\end{definition}

\bigskip
For simplicity we write
\[
[a_1^{k_1},...,a_r^{k_r}]:=[\underbrace {a_1,...,a_1}_{\makebox{$k_1$-times}},...,\underbrace {a_r,...,a_r}_{\makebox{$k_r$-times}}].
\]
In the following lemma we give a classification of all types which satisfy the conditions from definition \ref{condtypes} above.
This is the starting point of the classification of regular surfaces isogenous to a product with $\chi(\mathcal O_S)=2$ (see also \ref{condisogenous}). 

\bigskip

\begin{lemma}\label{tupel}
There are no types of length $r$ if  $r=7$ or $r \geq 9$.  The set $\mathcal N$ is finite and given by:

\[
\mathcal N = \left\lbrace
   \begin{array}{lllll}
\mbox{$[2,3,7]_{168}$},  & \mbox{$[2,3,8]_{96}$},   &  \mbox{$[2,4,5]_{80}$},   &  \mbox{$[2,3,9]_{72}$},  &  \mbox{$[2,3,10]_{60}$},  \\	  
\mbox{$	[2,3,12]_{48}$}, & \mbox{$[2,4,6]_{48}$},   &  \mbox{$[3^2,4]_{48}$},   &  \mbox{$[2,3,14]_{42}$}, &  \mbox{$[2,5^2]_{40}$},  \\  
\mbox{$[2,3,18]_{36}$},  & \mbox{$	[2,4,8]_{32}$},  & \mbox{$[2,3,30]_{30}$},   &  \mbox{$[2,5,6]_{30}$},  &  \mbox{$[3^2,5]_{30}$},  \\
\mbox{$[2,4,12]_{24}$},  & \mbox{$[2,6^2]_{24}$},   &  \mbox{$[3,4^2]_{24}$},   &  \mbox{$[3^2,6]_{24}$},  &  \mbox{$	[2^3,3]_{24}$}, \\
\mbox{$[3^2,7]_{21}$},   & \mbox{$[2,4,20]_{20}$},  &  \mbox{$[2,5,10]_{20}$},  &  \mbox{$[2,6,9]_{18}$},  &  \mbox{$[3^2,9]_{18}$},  \\
\mbox{$[2,8^2]_{16}$},   & \mbox{$[4^3]_{16}$},     &  \mbox{$[2^3,4]_{16}$},   &   \mbox{$[3,5^2]_{15}$}, &  \mbox{$[3^2,15]_{15}$}, \\                  \mbox{$[2,7,14]_{14}$},  &  \mbox{$[2,12^2]_{12}$}, &  \mbox{$[3,4,12]_{12}$},  &  \mbox{$[3,6^2]_{12}$},  &  \mbox{$[4^2,6]_{12}$},   \\
\mbox{$[2^3,6]_{12}$},   &  \mbox{$[2^2,3^2]_{12}$}, &  \mbox{$[5^3]_{10}$},    &  \mbox{$[2^3,10]_{10}$}, &  \mbox{$ [3,9^2]_{9}$}, \\  
\mbox{$[4,8^2]_{8}$},    &  \mbox{$[2^2,4^2]_{8}$},  &  \mbox{$[2^5]_{8}$},     &  \mbox{$[7^3]_{7}$},     &	\mbox{$[2^2,6^2]_{6}$},   \\  
\mbox{$[2,3^2,6]_{6}$},  &  \mbox{$[3^4]_{6}$},      &  \mbox{$[2^4,3]_{6}$},   &  \mbox{$[4^4]_{4}$},     &	\mbox{$[2^3,4^2]_{4}$},   \\  
\mbox{$[2^6]_{4}$},      &  \mbox{$[3^5]_{3}$},      &  \mbox{$[2^8]_{2}$}  	 
\end{array}
\right\rbrace
\]

\end{lemma}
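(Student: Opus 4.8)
The plan is to treat the statement as a finite Diophantine enumeration, governed by the interplay of the hyperbolicity condition $\Theta(T)>0$ with the two arithmetic constraints $\alpha(T)=4/\Theta(T)\in\mathbb N$ and $m_i\mid\alpha(T)$. Throughout I write $T=[m_1,\dots,m_r]$ with $2\le m_1\le\cdots\le m_r$ and abbreviate $\Theta=\Theta(T)$, $\alpha=\alpha(T)$. First I would bound the length. Since $m_1\mid\alpha$ and $m_1\ge 2$, we get $\alpha\ge 2$ and hence $\Theta=4/\alpha\le 2$. On the other hand each summand satisfies $1-1/m_i\ge 1/2$, so
\[
\Theta=-2+\sum_{i=1}^r\Big(1-\frac{1}{m_i}\Big)\ge\frac{r}{2}-2.
\]
Combining $r/2-2\le\Theta\le 2$ forces $r\le 8$, which already disposes of every $r\ge 9$. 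For $r=8$ the two inequalities squeeze $\Theta=2$, so $1-1/m_i=1/2$ for all $i$, i.e. $m_i=2$ throughout; this is the single type $[2^8]_2$.

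To exclude $r=7$ I would note that here $\sum 1/m_i\in(0,7/2]$, whence $\Theta=5-\sum 1/m_i\in[3/2,5)$; intersecting with $\Theta\le 2$ leaves $\Theta\in[3/2,2]$. The only integer $\alpha\ge 2$ with $4/\alpha$ in this interval is $\alpha=2$, which forces $\Theta=2$ and therefore (as above) every $m_i=2$; but then $\Theta=3/2\ne 2$, a contradiction. Hence $\mathcal N_7=\emptyset$.

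For the surviving lengths $r\in\{3,4,5,6\}$ I would exploit the identity $\alpha\Theta=4$ in its cleared form
\[
\sum_{i=1}^r\frac{\alpha}{m_i}=(r-2)\,\alpha-4,
\]
where, because $m_i\mid\alpha$, each quotient $\alpha/m_i$ is a positive integer in $[1,\alpha/2]$. To make the search finite I bound $\alpha$. Estimating the left-hand side by $r\alpha/2$ gives $\alpha(r/2-2)\le 4$, which yields $\alpha\le 8$ for $r=5$ and $\alpha\le 4$ for $r=6$. This estimate is vacuous for $r\in\{3,4\}$, so there I would instead invoke the minimal positive value of the defect $\Theta$: a short stepwise bounding of $m_1\le m_2\le m_3$ shows that the largest admissible value of $\sum 1/m_i$ below $1$ is $1/2+1/3+1/7$, so $\min\Theta=1/42$ for $r=3$ (attained at $[2,3,7]$), and the analogous estimate gives $\min\Theta=2-(1/2+1/2+1/2+1/3)=1/6$ for $r=4$ (attained at $[2^3,3]$). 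Through $\alpha=4/\Theta$ these translate into $\alpha\le 168$ and $\alpha\le 24$.

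With $\alpha$ bounded in every length, the classification reduces to a finite mechanical search: for each admissible $\alpha$ I would list the divisors $m\ge 2$ of $\alpha$ and solve the displayed integer equation for non-increasing tuples of quotients $\alpha/m_i$. By construction every such tuple satisfies $\alpha(T)=\alpha$ and $m_i\mid\alpha$ with $\Theta=4/\alpha>0$, hence is a type, and conversely each type is recovered precisely at $\alpha=\alpha(T)$, so the search is exact and the resulting list is the tabulated $\mathcal N$; in particular $\mathcal N$ is finite. As a sample, at $r=5$ one obtains exactly $[2^5]_8$, $[2^4,3]_6$, $[2^3,4^2]_4$ and $[3^5]_3$. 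The only genuinely laborious step is $r=3$, where $\alpha$ ranges up to $168$ and the divisor bookkeeping is heaviest; establishing the sharp gap $\Theta\ge 1/42$ cleanly, so as to cut off $\alpha$, is the main technical point, while everything downstream is routine finite checking.
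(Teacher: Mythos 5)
Your proposal is correct, and it reaches the finite list by a genuinely different reduction than the paper. Both arguments begin with the same key observation that divisibility caps $\Theta$: the paper uses $m_r \mid \alpha$ to get $\Theta \le 4/m_r$ (its inequality $(\ast)$), you use $m_1 \mid \alpha$ to get $\Theta \le 2$; either way $r \le 8$ follows. After that the routes diverge. The paper keeps exploiting $(\ast)$ to bound the \emph{periods}: for $r=3$ it shows $m_2 \ge 3$ and $m_3 \le 30$, and for $r \ge 4$ it gets $m_r \le 10/(r-3)$ (which disposes of $r=7$ only implicitly, inside the final check), then enumerates bounded tuples by computer. You instead bound $\alpha$: for $r \in \{5,6\}$ via the cleared integer equation $\sum_i \alpha/m_i = (r-2)\alpha - 4$, and for $r \in \{3,4\}$ via the classical minimal hyperbolic defects $\Theta \ge 1/42$ and $\Theta \ge 1/6$, then search over divisors of each admissible $\alpha$. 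Your route buys sharp, conceptually clean bounds ($\alpha \le 168$, attained at $[2,3,7]$) and a search naturally indexed by $\alpha$ and its divisors, at the price of proving the $(2,3,7)$ and $(2,2,2,3)$ minimality facts by a case analysis that the paper's divisibility trick entirely avoids ($m_3 \le 30$ needs no such analysis). Both proofs end in a finite mechanical enumeration that neither writes out, so on that point you match the paper, which delegates it to MAGMA.

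One small repair is needed in your $r=7$ step: the phrase ``$\alpha=2$, which forces $\Theta=2$ and therefore (as above) every $m_i=2$'' is not justified by the $r=8$ squeeze, since with seven summands $\sum_i (1-1/m_i) = 4$ does \emph{not} force every term to equal $1/2$ (for instance $[2^5,4^2]$ satisfies it). The conclusion is nevertheless immediate from a fact you already invoked: $\alpha = 2$ together with $m_i \mid \alpha$ and $m_i \ge 2$ forces $m_i = 2$ for all $i$, and then $\Theta = 3/2 \ne 2$ gives the contradiction. So this is a one-line fix in the justification, not a missing idea.
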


\bigskip

\begin{proof}
We use the fourth property in the case $i=r$: 
\[
\makebox{From} ~~ m_r \big\vert   \frac{4}{-2 + \sum_{i=1}^r (1- \frac{1} {m_i})} ~~ \makebox{it follows} ~~ 
\sum_{i=1}^r (1- \frac{1} {m_i}) \leq 2 + \frac{4} {m_r} ~~ (\ast).
\]
Since $ m_i \geq 2 $ for all  $ 1 \leq i \leq r $, we get 
\[
r-2 \leq \sum_{i=1}^r  \frac{1} {m_i} + \frac{4} {m_r} \leq \frac{r} {2} + 2,
\]
and therefore $r \leq 8$.
\noindent
We now investigate two cases: $r =3$ and $r \geq 4 $. 
\begin{itemize}
\item If $r =3$, we claim that $ m_2 \geq 3 $. Suppose $ m_2 = 2 $, then also $ m_1 = 2 $ and  
\[
0 < \Theta([m_1,m_2,m_3])= 1 -  \frac{1} {m_1} -  \frac{1} {m_2}  -  \frac{1} {m_3} = -  \frac{1} {m_3},  
\]
a contradiction.
The inequality $(\ast)$ in the case  $r=3$ reads
\[
\frac{5} {m_3} \geq 1 - \frac{1} {m_1} - \frac{1} {m_2} \geq 1 - \frac{1} {2} - \frac{1} {3} = \frac{1} {6}.  
\]
From this we conclude  $m_3 \leq 30$. 
\item
In the second case $r \geq 4$, we use the formula $(\ast)$ again:
\[
r - \sum_{i=1}^r\frac{1} {m_i} \leq 2 + \frac{4} {m_r}
\]
and get
\[
\frac{5} {m_r} \geq r - 2 - \sum_{i=1}^{r-1}\frac{1} {m_i} \geq \frac{r - 3}{2}.
\]
Because of this, we always have  $10 \geq  \frac{10} {r - 3} \geq m_r $ in that case. 
\end{itemize}
Now, it suffices to check only a finite number of types. This can be easily done with a computer.
\end{proof}

\bigskip

\begin{definition}
Let $G$ be a finite group, $2 \leq m_1 \leq ... \leq m_r$ integers. 
A \emph{spherical system} of generators of $G$  of type $[m_1, ... ,m_r]$ is an $r$-tuple 
$A=(g_1,...,g_r)$ of elements of $G$, such that: 
\begin{itemize}
\item
$G = \langle g_1, ... ,g_r \rangle$, ~~ $g_1 \cdot ... \cdot g_r= 1_G$.
\item
There exists a permutation $ \tau \in \mathfrak S_r$, such that  $ord(g_i)= m_{\tau(i)}$.
\end{itemize}
The \emph{stabilizer set} of  $A$ is defined as  
\[
\Sigma(A):= \bigcup_{h \in G} \bigcup_{j \in \mathbb Z} \bigcup_{i=1}^r \{hg_i^jh^{-1}\}.
\]
A pair $(A_1,A_2)$ of spherical systems of generators  of  $G$ is called  \emph{disjoint}, if and only if
\[
\Sigma(A_1) \cap \Sigma(A_2)= \{1_G\}.
\] 
\end{definition}

\bigskip

The geometry behind this definition is known as \emph{Riemann's existence theorem}. A detailed explanation can be found in 
\cite[chapter III, sections 3 and 4]{mir}. We will use the following version of this theorem:  

\bigskip

\begin{theorem}[Riemann's existence theorem]
A finite group $G$ acts as a group of automorphisms on a compact Riemann surface $C$ of genus $g(C) \geq 2$, such that  
$C/G \simeq \mathbb P_{\mathbb C}^1$ 
if and only if  there exists a spherical system of generators $A$ of $G$ of type $T=[m_1,...,m_r]$, such that
the following Riemann-Hurwitz formula holds:
\[
2g(C) -2 = |G|\Theta(T).
\]
\end{theorem}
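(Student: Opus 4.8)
The plan is to prove both implications by translating the geometry into the topology of coverings of the punctured sphere and back. Fix distinct points $q_1,\dots,q_r \in \mathbb{P}^1_{\mathbb C}$ together with a base point $q_0$, and recall the standard presentation
\[
\pi_1\big(\mathbb{P}^1_{\mathbb C}\setminus\{q_1,\dots,q_r\},q_0\big)=\langle \gamma_1,\dots,\gamma_r \mid \gamma_1\cdots\gamma_r=1\rangle,
\]
where $\gamma_i$ is a simple loop encircling $q_i$; this is a free group of rank $r-1$. The whole argument is the dictionary between finite quotients of this group and branched Galois covers $C\to\mathbb{P}^1_{\mathbb C}$, so the two directions of the theorem correspond to reading this dictionary in the two directions.

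For the forward direction, suppose $G$ acts on $C$ with quotient map $\pi\colon C\to C/G\simeq \mathbb{P}^1_{\mathbb C}$. I would first observe that $\pi$ is a branched Galois cover with deck group $G$; let $q_1,\dots,q_r$ be its branch locus. Deleting the fibres over the $q_i$ turns $C^\circ:=C\setminus\pi^{-1}(\{q_i\})\to\mathbb{P}^1_{\mathbb C}\setminus\{q_i\}$ into an unramified connected Galois cover, hence is classified by a \emph{surjective} monodromy homomorphism $\rho\colon \pi_1(\mathbb{P}^1_{\mathbb C}\setminus\{q_i\},q_0)\to G$, surjectivity being equivalent to connectedness of $C$. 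Setting $g_i:=\rho(\gamma_i)$, the relation $\prod\gamma_i=1$ yields $g_1\cdots g_r=1_G$ and surjectivity yields $\langle g_1,\dots,g_r\rangle=G$. Because $G$ acts faithfully on the smooth curve $C$, the stabilizer of any point lying over $q_i$ is cyclic, and inspecting the local monodromy around $q_i$ identifies $\mathrm{ord}(g_i)$ with the common ramification index $m_i$ there. Thus $A=(g_1,\dots,g_r)$ is a spherical system of generators of type $T=[m_1,\dots,m_r]$, and feeding the ramification data ($|G|/m_i$ points of index $m_i$ over each $q_i$) into Riemann--Hurwitz for $\pi$ gives $2g(C)-2=|G|\Theta(T)$.

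For the converse I would run the same dictionary backwards. A spherical system $A=(g_1,\dots,g_r)$ of type $T$ defines a homomorphism $\rho\colon \pi_1(\mathbb{P}^1_{\mathbb C}\setminus\{q_i\},q_0)\to G$ by $\gamma_i\mapsto g_i$, well defined because $\prod g_i=1$ and surjective because $\langle g_i\rangle=G$. Covering space theory then produces a connected unramified $G$-cover $C^\circ\to\mathbb{P}^1_{\mathbb C}\setminus\{q_i\}$. One equips $C^\circ$ with the unique complex structure making this cover holomorphic and fills in the punctures: over a small punctured disk around $q_i$ the cover splits into components each modelled on the standard map $z\mapsto z^{m_i}$ (the local monodromy being $g_i$, of order $m_i$), so each puncture is completed by finitely many points. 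This yields a compact Riemann surface $C$ with a holomorphic $G$-action, $C/G\simeq\mathbb{P}^1_{\mathbb C}$, ramification index $m_i=\mathrm{ord}(g_i)$ over $q_i$, and once again $2g(C)-2=|G|\Theta(T)$ by Riemann--Hurwitz; since $\Theta(T)>0$ this forces $g(C)\geq 2$.

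The main obstacle is precisely the analytic completion in the converse: upgrading a purely topological cover of the punctured sphere to an honest compact Riemann surface carrying a holomorphic $G$-action. This is the genuine content of Riemann's existence theorem, and the cleanest route is to invoke the local normal form $z\mapsto z^{m_i}$ for finite covers of the punctured disk in order to extend the complex structure holomorphically across the branch points, exactly as carried out in \cite[chapter III, sections 3 and 4]{mir}. Everything else --- faithfulness of the induced action, the identification $\mathrm{ord}(g_i)=m_i$ via local monodromy, and the Riemann--Hurwitz bookkeeping --- is then routine.
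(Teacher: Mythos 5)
Your proposal is correct and takes essentially the same approach as the paper's source: the paper offers no proof of this theorem, instead deferring to \cite[chapter III, sections 3 and 4]{mir}, and your monodromy/covering-space dictionary (surjective homomorphism from $\pi_1$ of the punctured sphere in the forward direction, filling in punctures via the local model $z\mapsto z^{m_i}$ in the converse) is precisely that standard argument. You also correctly isolate the only step with genuine analytic content, namely extending the complex structure across the branch points, which is exactly what the cited reference supplies.
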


\bigskip

By Riemann's existence theorem we have a  \emph{group theoretical description} of surfaces isogenous to a product: 
Given 
$S=(C_1 \times C_2) / G$, isogenous to a product, we can attach 
a disjoint pair of spherical systems of generators 
\[
(A_1(S),A_2(S)) ~~ \makebox{of type} ~~  (T_1(S),T_2(S)).
\]
Geometrically, disjoint means that $G$ acts without fixed points on $C_1 \times C_2$.  
Conversely, the data above determine a surface isogenous to a product.

Next, we want to show that the types $(T_1(S),T_2(S))$, attached to a regular surface $S$ isogenous to a product with $p_g(S)=1$ of unmixed type, satisfy the conditions of \ref{condtypes}. The proof of this fact is similar to the proof given in \cite{bauer}. For convenience of the reader we will present the proof. 

\bigskip

\begin{theorem}\label{condisogenous} 

Let $S$ be a surface isogenous to a product of curves of unmixed type with $ p_g(S)=1$ and $q(S)=0 $.
Let  $ T_1(S)=[m_1, ... ,m_r]$ and $T_2(S)= [n_1, ... ,n_s] $ be the corresponding types, then   

\begin{itemize}
\item $ \Theta(T_i(S)) > 0$ for $i=1,2$.
\item $ \alpha(T_i(S)) \in  \mathbb N$ for $i=1,2$.
\item $ m_i \big\vert \alpha(T_1(S)) $  for all $ 1 \leq i \leq r $  and   $n_i \big\vert \alpha(T_2(S))$  for all  $ 1  \leq i \leq s $. 

\end{itemize}
\end{theorem}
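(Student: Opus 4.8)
The plan is to read off all three properties from Riemann's existence theorem together with the numerical relation \eqref{1}. Throughout I write $A_1=(g_1,\dots,g_r)$ and $A_2=(h_1,\dots,h_s)$ for the disjoint pair of spherical systems of generators attached to $S$, so that $A_i$ realizes the Galois cover $f_i\colon C_i\to C_i/G\cong \mathbb P_{\mathbb C}^1$ and the Riemann--Hurwitz formula gives $2g(C_1)-2=|G|\,\Theta(T_1)$ and $2g(C_2)-2=|G|\,\Theta(T_2)$. The positivity statement is then immediate: since each $C_i$ has genus at least two, the quantity $2g(C_i)-2$ is a positive integer, and dividing the Riemann--Hurwitz identity by $|G|>0$ forces $\Theta(T_i)>0$.

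For the integrality, I would use the definition $\alpha(T)=4/\Theta(T)$ and rewrite Riemann--Hurwitz as $\Theta(T_1)=2(g(C_1)-1)/|G|$, so that $\alpha(T_1)=2|G|/(g(C_1)-1)$. Substituting $2|G|=(g(C_1)-1)(g(C_2)-1)$ from \eqref{1} collapses this to $\alpha(T_1)=g(C_2)-1$, and symmetrically $\alpha(T_2)=g(C_1)-1$. Both are positive integers because $g(C_i)\geq 2$, which establishes the second bullet and, as a bonus, identifies $\alpha(T_i)$ geometrically as the reduced genus of the other factor.

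The divisibility is the part that genuinely uses that $G$ acts freely on the product, and I expect it to be the main point. Fix a generator $g_i$ of $A_1$, of order $m_i$. In the cover $f_1$ the cyclic group $\langle g_i\rangle$ is (conjugate to) the stabilizer of a point of $C_1$ lying over the $i$-th branch point, so $g_i^j\in\Sigma(A_1)$ for $1\leq j<m_i$; that is, every non-trivial element of $\langle g_i\rangle$ has a fixed point on $C_1$. By disjointness $\Sigma(A_1)\cap\Sigma(A_2)=\{1_G\}$, so no non-trivial power $g_i^j$ lies in $\Sigma(A_2)$, meaning no non-trivial element of $\langle g_i\rangle$ fixes a point of $C_2$. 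Hence $\langle g_i\rangle\cong\mathbb Z/m_i\mathbb Z$ acts freely on $C_2$, the quotient $C_2\to C_2/\langle g_i\rangle$ is an unramified covering of degree $m_i$, and Riemann--Hurwitz for this étale cover yields $g(C_2)-1=m_i\bigl(g(C_2/\langle g_i\rangle)-1\bigr)$. Therefore $m_i\mid g(C_2)-1=\alpha(T_1)$. Applying the identical argument to the generators $h_i$ of $A_2$, which act freely on $C_1$, gives $n_i\mid g(C_1)-1=\alpha(T_2)$.

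The conceptual obstacle is precisely the observation that freeness of the $G$-action on $C_1\times C_2$ forces each cyclic stabilizer arising on one factor to act without fixed points on the other factor, which is where the hypothesis of unmixed type and the disjointness condition are used; once this is in hand the divisibility is nothing more than the multiplicativity of $g-1$ under unramified covers. Everything else reduces to routine substitutions into Riemann--Hurwitz and \eqref{1}, so I would keep those computations brief.
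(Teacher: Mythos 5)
Your proposal is correct and follows essentially the same route as the paper: positivity and integrality from Riemann--Hurwitz combined with $2|G|=(g(C_1)-1)(g(C_2)-1)$, and divisibility by observing that each cyclic subgroup $\langle g_i\rangle$ generated by an element of $A_1$ acts freely on $C_2$, so that $C_2\to C_2/\langle g_i\rangle$ is unramified and $m_i$ divides $g(C_2)-1=\alpha(T_1)$. The only cosmetic difference is that you phrase the freeness argument through the combinatorial disjointness condition $\Sigma(A_1)\cap\Sigma(A_2)=\{1_G\}$, whereas the paper argues directly with a fixed point on $C_1$ and freeness of the action on the product; the paper itself notes these are equivalent.
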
 

\bigskip

\begin{proof} 
We consider the holomorphic maps $f_i: C_i \to C_i/G$
and apply the Riemann-Hurwitz formula 
\begin{gather}
2g(C_i) - 2 = |G|  \Theta(T_i(S)),  \quad i=1,2. 
\label{2}
\end{gather} 
This already shows the first claim, since $g(C_1) \geq 2 $ and $g(C_2) \geq 2 $. 
From  (\ref{2}) and  $ 2 |G|= (g(C_1) -1 )(g(C_2) -1 )$ (\ref{1}), we deduce  
\[ 
\alpha(T_1(S)) = \frac{4}{\Theta(T_1(S))}= g(C_2)-1  ~~ \makebox{and} ~~  
\alpha(T_2(S)) =\frac{4}{\Theta(T_2(S))} = g(C_1)-1 
\] 
so the second claim follows. 
\noindent
It remains to prove the third claim. Let $ A_1(S)=(g_1, ... ,g_r) $ be a corresponding ordered spherical system of generators of  $G$ of type $T_1(S)$.
The cyclic group  $\langle g_i \rangle $ of order $m_i$ acts on  $C_1$ with at least one fixed point, but the action on the product  $C_1 \times C_2$ 
is free. Therefore $\langle g_i \rangle $ acts on $C_2 $ freely. The map $C_2 \to C$ 
of degree $m_i$, where $C:=C_2/{\langle g_i \rangle }$ is unramified. In this case we have $2g(C)-2= \frac{2g(C_2)-2}{m_i} > 0$, due to Riemann-Hurwitz. Hence 
\[
g(C_2)-1 = \alpha(T_1(S)) = m_i (g(C)-1),
\]
for all  $i=1, ... ,r$. With the same argument we can show that  $n_i  \big\vert \alpha(T_2(S))$ for all $i=1, ... ,s$.  
\end{proof}

\bigskip

\section{Moduli Spaces}\label{section4}

\bigskip
\bigskip

In this section we want to describe the moduli space of surfaces isogenous to a product. We follow the papers \cite[S31,S8-9]{bauer} and
\cite[Appendix]{pen11}.
Due to the work of Gieseker \cite{Gieseker} there exists a quasi-projective moduli space of minimal smooth projective surfaces of general type 
with fixed invariants  $K_S^2$ and $\chi( \OH_S)$, which is denoted by $ \mathfrak M_{(\chi(\OH_S),K_S^2)}$.
For a fixed finite group $G$ and a fixed pair of types $(T_1,T_2)$ we denote the subset of  $\mathfrak M_{(2,16)}$ of isomorphism
classes of surfaces isogenous to a product, which admit a disjoint pair of spherical systems of generators $(A_1,A_2)$ of type 
$(T_1,T_2)$, by $\mathfrak M_{(G,T_1,T_2)}$.

\bigskip

\begin{theorem}\cite[Remark 5.1]{bauer}
\begin{itemize}
\item
The subset $\mathfrak M_{(G,T_1,T_2)} \subset \mathfrak M_{(2,16)}$  consists of a finite number of
connected components of the same dimension, which are irreducible in the
Zariski topology.
\item
The dimension $d(G,T_1,T_2)$ of any component in $\mathfrak M_{(G,T_1,T_2)}$ 
is 
\[
d(G,T_1,T_2)= \it l (T_1) -  3 + \it l (T_2) -  3.
\]
\end{itemize}
\end{theorem}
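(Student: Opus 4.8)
The plan is to construct an explicit parametrization of $\mathfrak M_{(G,T_1,T_2)}$ using the group-theoretic data and then read off both the finiteness and the dimension formula from this description. By Riemann's existence theorem, a surface $S$ isogenous to a product with the fixed invariants, group $G$ and types $(T_1,T_2)$ corresponds to a disjoint pair of spherical systems of generators $(A_1,A_2)$. First I would recall that once a type $T_1=[m_1,\dots,m_r]$ is fixed, the curve $C_1$ (together with the $G$-action and the covering $f_1\colon C_1\to\mathbb P^1_{\mathbb C}$) is determined by a spherical system of generators of type $T_1$ together with the position of the $r$ branch points in $\mathbb P^1_{\mathbb C}$. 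The moduli of $r$ ordered branch points on $\mathbb P^1_{\mathbb C}$, up to the action of $\mathrm{PGL}_2(\mathbb C)$, forms a variety of dimension $r-3$; this is the source of the summand $\it l(T_1)-3$. The same holds for the second factor with dimension $\it l(T_2)-3$, and since the action is of unmixed (diagonal) type the two curves vary independently, giving the additive formula.

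Next I would make the finiteness statement precise. For a fixed $G$ and fixed types, there are only finitely many disjoint pairs $(A_1,A_2)$ of spherical systems of generators, because $G$ is finite and each $g_i$ ranges over the finite set $G$. These finitely many combinatorial data organize the moduli into finitely many pieces. The key subtlety is that different spherical systems can yield isomorphic surfaces: one must quotient by the natural equivalences, namely the Hurwitz moves (braid group action) coming from moving the branch points, simultaneous conjugation by $\mathrm{Aut}(G)$, and the possible extra symmetry exchanging the two factors. After passing to the quotient by these group actions one obtains finitely many equivalence classes, and I would argue that each class gives rise to exactly one connected component of $\mathfrak M_{(G,T_1,T_2)}$. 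The connectedness and irreducibility of each component follow because the parameter space of branch-point configurations is itself connected and irreducible (an open subset of a product of projective lines modulo $\mathrm{PGL}_2$), and the covering data is locally constant along it.

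The core of the argument is therefore the identification of the universal family over each component with the family of curves obtained by varying the branch points while keeping the monodromy datum fixed. I would invoke the explicit description of the moduli space in Catanese's work, which identifies $\mathfrak M_{(G,T_1,T_2)}$ with a quotient of the product of two configuration-type spaces, and note that Gieseker's quasi-projective moduli space $\mathfrak M_{(2,16)}$ receives this family via the classifying map; the image is exactly $\mathfrak M_{(G,T_1,T_2)}$. The dimension count then reduces to the dimension of the configuration space of branch points, which is $\it l(T_i)-3$ for each factor, the $-3$ accounting for the three-dimensional group $\mathrm{PGL}_2(\mathbb C)$ acting on $\mathbb P^1_{\mathbb C}$.

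The main obstacle will be showing that the natural map from the parameter space to Gieseker's moduli space is generically finite onto its image with the expected fibre dimension, i.e.\ that no further positive-dimensional collapsing occurs beyond the $\mathrm{PGL}_2$-quotient already accounted for. This amounts to checking that a general surface in a component has no unexpected deformations preserving the $G$-structure, equivalently that the deformations of $S$ as a surface isogenous to a product coincide with the deformations of the two branched covers. Since all deformations of a surface isogenous to a product are again of this form (a rigidity statement due to Catanese), I would cite this result to close the gap; with it in hand, both bullet points follow directly from the configuration-space description.
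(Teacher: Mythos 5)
The paper does not actually prove this statement: it is imported verbatim from Bauer--Catanese--Grunewald \cite[Remark 5.1]{bauer}, so there is no internal proof to compare against. Your sketch is, in substance, a correct reconstruction of the argument in that cited source (which goes back to Catanese \cite{cat}): Riemann's existence theorem converts the data into a pair of spherical systems of generators together with branch-point configurations; the configuration space of $l(T_i)$ points on $\mathbb P^1_{\mathbb C}$ modulo $\mathrm{PGL}_2(\mathbb C)$ contributes $l(T_i)-3$ to each factor, and the unmixed structure makes the contributions add; finiteness of $G$ gives finiteness of the combinatorial data; and irreducibility of each piece follows because it is the image of an irreducible parameter space. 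The one point you should make sharper is the passage from ``connected irreducible subset'' to ``connected component'': the image of your parameter space is a priori only a constructible connected subset of $\mathfrak M_{(2,16)}$, and to conclude that $\mathfrak M_{(G,T_1,T_2)}$ is a union of connected components you need it to be both open and closed. Your appeal to Catanese's rigidity theorem (every small deformation of a surface isogenous to a product is again such, with the same group and types) gives openness, but closedness --- that a limit in moduli of surfaces isogenous to a product with fixed data is again one --- is a separate part of Catanese's theorem, resting on his topological characterization of such surfaces via the fundamental group and the Euler number, and should be invoked explicitly rather than folded into the word ``rigidity''. With that caveat, your outline is the proof in the literature, and it supplies an argument where the paper under review only supplies a citation.
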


\bigskip
The problem to determine  the number $n$ of the connected components of $\mathfrak M_{(G,T_1,T_2)}$ can be translated in a group theoretical problem. We recall the following definition:

\bigskip

\begin{definition}

Let $r \in \mathbb N$ be a positive integer. We define the \emph{Artin-Braid group} $\boldsymbol B_r$ as 
\[
\boldsymbol B_r :=
\left \langle \si_1, ... ,\si_{r-1} \left| \left. \begin{matrix} \si_i \si_j = \si_j \si_i \quad if \quad |i-j| > 1\\ 
                                                 \si_i \si_{i+1} \si_i = \si_{i+1} \si_i \si_{i+1}   
                                                 \quad for \quad i=1, ... ,r-2
                                                           
\end{matrix} \right\rangle \right. \right..
\]
\end{definition}

\bigskip

Let $G$ be a finite group and $T$ be a type of length $\it l(T)=r$. We denote the set of spherical systems of generators of $G$ of type $T$ by $\mathcal B(G,T)$. 
The Artin-Braid group $\boldsymbol B_r$ acts on $\mathcal B(G,T)$ as follows:  
\[
\si_i(A):=(g_1, ... ,g_{i-1},g_i \cdot g_{i+1} \cdot g_{i}^{-1},g_i,g_{i+2}, ... ,g_r),
\]
for all $A=(g_1, ... ,g_r) \in \mathcal B(G,T)$ and $1 \leq i \leq r-1$. 
This determines a well-defined action, which is called the \emph{Hurwitz action}. There is also a natural action of 
$Aut(G)$ on $\mathcal B(G,T)$: 
\[
\varphi(A):=(\varphi(g_1), ... ,\varphi(g_r)),
\]
for all $\varphi \in Aut(G)$.
Let  $(\gamma_1,\gamma_2,\varphi) \in \boldsymbol B_r \times \boldsymbol B_s \times Aut(G)$ be a triple and $(A_1,A_2) \in \mathcal B(G,T_1)\times \mathcal B(G,T_2)$, we define 
\[
(\gamma_1,\gamma_2,\varphi) \cdot (A_1,A_2) :=(\varphi(\gamma_1(A_1)),\varphi(\gamma_2(A_2))). 
\]
It is easy to verify that this defines an action of $\boldsymbol B_r \times \boldsymbol B_s \times Aut(G)$ on \\ 
$ \mathcal B(G,T_1) \times \mathcal B(G,T_2)$. We denote this action by $\mathfrak T$, and its restriction to the first factor 
$ \mathcal B(G,T_1)$ by $\mathfrak T_1$ and to the second factor $ \mathcal B(G,T_2)$ by $\mathfrak T_2$. 
\footnote{We want to stress that the action of $Aut(G)$ and the Hurwitz action commute.}

\bigskip

\begin{theorem}\cite[Proposition 5.2]{bauer}
Let  $S$ and $S'$ be surfaces isogenous to a product of unmixed
type with $q(S)=q(S')=0$. The surfaces $S$ and $S'$ are in the same irreducible
component if and only if
\begin{itemize}
\item $G(S) \simeq G(S')$,
\item $(T_1(S),T_2(S))=(T_1(S'),T_2(S'))$,
\item $(A_1(S),A_2(S))$ and  $(A_1(S'),A_2(S'))$  are in the same $\mathfrak T$-orbit, or \\
$(A_1(S),A_2(S))$ and  $(A_2(S'),A_1(S'))$ are in the same $\mathfrak T$-orbit.

\end{itemize}
\end{theorem}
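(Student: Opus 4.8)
The plan is to identify ``being in the same irreducible component'' with being deformation equivalent through surfaces isogenous to a product, and then to translate this into the combinatorial statement by means of Riemann's existence theorem. First I would invoke the structure result above: $\mathfrak M_{(G,T_1,T_2)}$ is a finite union of irreducible components of the same dimension, and since the moduli space is quasi-projective these irreducible components coincide with the connected components. Thus $S$ and $S'$ lie in the same irreducible component if and only if they can be joined by a connected family of surfaces isogenous to a product. The first two conditions are then seen to be \emph{necessary} deformation invariants: $\pi_1(S)$ and the holomorphic invariants are locally constant in a family, and by Catanese's structure theorem any small deformation of a surface isogenous to a product is again isogenous to a product with the same group $G$ and the same genera $g(C_i)$. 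By the Riemann--Hurwitz formula (\ref{2}) the genera together with $|G|$ then determine the pair of types $(T_1,T_2)$ up to interchanging the two factors. This reduces the problem to a \emph{fixed} stratum $\mathfrak M_{(G,T_1,T_2)}$.

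Next I would make the parametrization explicit. By Riemann's existence theorem a point of the stratum is built from a disjoint pair $(A_1,A_2) \in \mathcal B(G,T_1) \times \mathcal B(G,T_2)$ together with an unordered choice of branch points on each $\mathbb P^1_{\mathbb C}$. The branch divisors vary in the configuration spaces of $l(T_1)$ and $l(T_2)$ points on the sphere, which are connected; this accounts for the $l(T_1)-3+l(T_2)-3$ moduli of the dimension formula. The key point is to compute the monodromy of the two families of $G$-covers as the branch points move around one another: this realizes exactly the \emph{Hurwitz action} of $\boldsymbol B_r \times \boldsymbol B_s$ on the monodromy data $(A_1,A_2)$, the elementary move $\si_i$ corresponding to dragging two adjacent branch points past each other. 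Independently, replacing the chosen identification of the deck group with the abstract group $G$ by another one acts through $Aut(G)$. Hence within the stratum the connected component of a point is precisely its orbit under the combined action $\mathfrak T$, which yields the third condition.

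Finally, the \emph{or} in the third condition comes from the symmetry $C_1 \times C_2 \cong C_2 \times C_1$: since the product is unordered, the data $(A_1(S),A_2(S))$ and $(A_2(S),A_1(S))$ describe the same surface $S$ equally well, so when comparing $S$ with $S'$ one may match $(A_1(S),A_2(S))$ either with $(A_1(S'),A_2(S'))$ or with $(A_2(S'),A_1(S'))$. I expect the main obstacle to be the monodromy computation of the second paragraph, namely establishing rigorously that the topological monodromy of the universal family of $G$-covers over the configuration space is generated by the elementary Hurwitz moves $\si_i$ and by $Aut(G)$ and nothing more, and conversely that distinct $\mathfrak T$-orbits land in distinct connected components. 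This rests on the Teichm\"uller-theoretic description of the moduli of covers and on the fact that the mapping class group of the punctured sphere surjects onto the relevant braid group. The remaining steps, the invariance of $G$ and of the types and the factor-swap, are comparatively formal.
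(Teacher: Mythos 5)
First, a point of comparison: the paper itself gives no proof of this statement --- it is quoted verbatim from \cite[Proposition 5.2]{bauer} --- so your sketch can only be measured against the argument of that source. Your overall architecture (Catanese's deformation invariance of the property of being isogenous to a product, plus the identification of connected components of a fixed stratum with orbits of the action $\mathfrak T$ on monodromy data via Riemann's existence theorem) does reproduce the cited proof in outline, and your treatment of the factor swap and of the role of $Aut(G)$ is correct.

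However, there is a genuine gap in your reduction to a fixed stratum. You assert that ``by the Riemann--Hurwitz formula the genera together with $|G|$ determine the pair of types $(T_1,T_2)$ up to interchanging the two factors.'' This is false. Riemann--Hurwitz determines only $\Theta(T_i)=(2g(C_i)-2)/|G|$, equivalently $\alpha(T_i)$, and distinct types routinely share this invariant: in the paper's own Lemma \ref{tupel} the five types $[2,4,12]$, $[2,6^2]$, $[3,4^2]$, $[3^2,6]$, $[2^3,3]$ all have $\alpha=24$, hence the same $\Theta$. So your argument cannot exclude that a deformation connects two strata $\mathfrak M_{(G,T_1,T_2)}$ and $\mathfrak M_{(G,T_1',T_2)}$ with $T_1\neq T_1'$ but $\Theta(T_1)=\Theta(T_1')$, whereas the theorem asserts precisely that the types themselves are deformation invariants. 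The correct argument recovers the unordered pair $\{T_1,T_2\}$ from the fundamental group: $\pi_1(S)$ sits in an exact sequence $1 \to \Pi_{g_1}\times\Pi_{g_2} \to \pi_1(S) \to G \to 1$, the two factor subgroups $\Pi_{g_i}$ are canonically attached to $\pi_1(S)$ (this uniqueness itself requires an argument), and the quotients $\pi_1(S)/\Pi_{g_2}$ and $\pi_1(S)/\Pi_{g_1}$ are the polygonal groups $\mathbb T(T_1)$ and $\mathbb T(T_2)$, whose conjugacy classes of maximal finite cyclic subgroups give back the types; alternatively one argues that the branch data is locally constant in a connected family of such surfaces. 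Without one of these arguments the necessity of your second bullet point is unproven. A second, smaller flaw: quasi-projectivity of the moduli space does not imply that irreducible components coincide with connected components; that identification follows instead from the structure theorem quoted immediately before the statement (each connected component of $\mathfrak M_{(G,T_1,T_2)}$ is irreducible), combined with Catanese's theorem that this stratum is open and closed in $\mathfrak M_{(2,16)}$.
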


\bigskip
In theory we now have a  method to compute the number of connected components of $\mathfrak M_{(G;T_1,T_2)}$:
first we compute a representative $(A_1,A_2)$ for each orbit of the action $\mathfrak T$. 
Then we determine the pairs, where the intersection $\Sigma(A_1) \cap \Sigma(A_2)$ is trivial. If $T_1(S)=T_2(S)$ and there is more than one pair, we have to consider the $\mathbb Z_2$ action corresponding to the exchange of the curves.\footnote{This happens in only one of our examples. Thus we consider only the $\mathfrak T$ action in our program and treat the exceptional example separately. \ref{exchange}} 
The number of the remaining pairs is the number of connected components of $\mathfrak M_{(G;T_1,T_2)}$.
However, the set $ \mathcal B(G,T_1) \times \mathcal B(G,T_2)$ can be very large. Even with a computer it is not possible, or at least very time consuming, to perform this calculation. 
To improve the speed of the calculation we use an idea of Penegini and Rollenske, which is based on the following lemma:

\bigskip

\begin{lemma}\cite[Appendix 6.1]{pen11}\label{pen11}
Let $(A_1,A_2)$, $(B_1,B_2) \in \mathcal B(G,T_1)\times \mathcal B(G,T_2)$. 
\begin{itemize}

\item If  $A_i$ and $B_i$  are in the same orbit of the Hurwitz action, then also the pairs $(A_1,A_2)$ and  $(B_1,B_2)$ are in 
the same $\mathfrak T$-orbit.
\item If  $A_1$ and $B_1$ are in different $\mathfrak T_1$-orbits, then also  $(A_1,A_2)$ and $(B_1,B_2)$ are in different $\mathfrak T$ orbits.
\end{itemize}

\end{lemma}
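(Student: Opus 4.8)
The plan is to prove each of the two bullet points separately, both of which concern the relationship between the Hurwitz action and the combined $\mathfrak{T}$-action of $\boldsymbol{B}_r \times \boldsymbol{B}_s \times Aut(G)$. The key observation is that the $\mathfrak{T}$-action factors in a controlled way: by definition, $(\gamma_1, \gamma_2, \varphi)$ applies braid moves componentwise and then applies a single automorphism $\varphi$ simultaneously to both components. The footnote remark that the $Aut(G)$-action and the Hurwitz action commute is the technical fact I would exploit throughout.

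For the first bullet, suppose $A_i$ and $B_i$ lie in the same Hurwitz orbit for both $i=1,2$; that is, there exist braids $\beta_1 \in \boldsymbol{B}_r$ and $\beta_2 \in \boldsymbol{B}_s$ with $\beta_1(A_1) = B_1$ and $\beta_2(A_2) = B_2$. First I would take the triple $(\beta_1, \beta_2, \mathrm{id}_G) \in \boldsymbol{B}_r \times \boldsymbol{B}_s \times Aut(G)$ and apply it to $(A_1, A_2)$. Unwinding the definition of $\mathfrak{T}$ gives
\[
(\beta_1, \beta_2, \mathrm{id}_G) \cdot (A_1, A_2) = (\mathrm{id}_G(\beta_1(A_1)), \mathrm{id}_G(\beta_2(A_2))) = (B_1, B_2),
\]
so the two pairs are manifestly in the same $\mathfrak{T}$-orbit. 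This direction is essentially a bookkeeping check that the identity automorphism together with the two given braids realizes the desired group element, so I expect it to be routine.

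For the second bullet I would argue contrapositively: assuming $(A_1, A_2)$ and $(B_1, B_2)$ \emph{are} in the same $\mathfrak{T}$-orbit, I must show $A_1$ and $B_1$ lie in the same $\mathfrak{T}_1$-orbit. By hypothesis there is a triple $(\gamma_1, \gamma_2, \varphi)$ with $(\varphi(\gamma_1(A_1)), \varphi(\gamma_2(A_2))) = (B_1, B_2)$. Reading off the first component gives $\varphi(\gamma_1(A_1)) = B_1$, which is precisely the statement that $A_1$ and $B_1$ are related by the element $(\gamma_1, \varphi)$ of the first-factor action $\mathfrak{T}_1$, i.e.\ they lie in the same $\mathfrak{T}_1$-orbit. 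The main point to make precise here is that $\mathfrak{T}_1$ is defined exactly as the restriction of $\mathfrak{T}$ to $\mathcal{B}(G, T_1)$, so that the first component of a $\mathfrak{T}$-relation is automatically a $\mathfrak{T}_1$-relation; once that definition is in hand, the contrapositive statement that different $\mathfrak{T}_1$-orbits force different $\mathfrak{T}$-orbits follows immediately. I do not anticipate a genuine obstacle in either direction — the lemma is really a structural consequence of the product form of the $\mathfrak{T}$-action — but the step requiring the most care is keeping track of the order in which $\varphi$ and the braids are applied and confirming (via the commuting property) that nothing in the second component interferes with the conclusion about the first.
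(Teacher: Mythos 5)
Your proof is correct; the paper states this lemma without proof (citing Penegini's appendix), and your argument --- applying the triple $(\beta_1,\beta_2,\mathrm{id}_G)$ for the first bullet and projecting a $\mathfrak T$-relation onto its first component for the contrapositive of the second --- is precisely the definitional unwinding the result amounts to. One minor point: the commutativity of the $Aut(G)$-action with the Hurwitz action, which you announce as the key technical fact to be exploited throughout, is never actually used in either direction; both bullets follow purely from the product form of $\mathfrak T$ and the fact that $\mathfrak T_1$ is by definition the restriction of $\mathfrak T$ to the first factor.
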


\newpage

Now we have an effective algorithm:

\begin{itemize}
\item
Compute a set $\mathfrak R_1$ of representatives of the action $\mathfrak T_1$ on $\mathcal B(G,T_1)$. 
\item
Compute a set $\mathfrak R_2$ of representatives of the Hurwitz action on $\mathcal B(G,T_2)$. 
\item 
Determine the set of tuples  $(A_1,A_2) \in \mathfrak R_1 \times \mathfrak R_2 $ which satisfy:
\[
\Sigma(A_1) \cap \Sigma(A_2) =\lbrace 1_G \rbrace.
\]
This set is denoted by $\mathfrak R$.
\end{itemize}

We achieve the following: Every orbit of $\mathfrak T$ has at least one representative in $\mathfrak R$ by \ref{pen11}. Hence we have an upper bound 
for the number $n$ of $\mathfrak T$-orbits. We also have a lower bound for $n$. The pairs $(A_1,A_2)$ and $(B_1,B_2)$ are in different orbits of $\mathfrak T$, if 
\begin{enumerate}
\item
$A_1 \ne B_1$ or if  
\item
$A_2$ and $B_2$ are in different orbits of  $\mathfrak T_2$  (cf. \ref{pen11}). 
\end{enumerate}

\begin{itemize}
\item 
In most cases it is possible to determine the number $n$ of orbits of $\mathfrak T$ using this method. 
If this is not possible, then we  exchange the pairs $T_1$ and $T_2$ and compute the set $\mathfrak R$ again. 
If it is still not possible to determine $n$ we have to identify the pairs of the smaller set using the action $\mathfrak T$.
\end{itemize}

\bigskip

\section{The algorithm and the classification result}\label{section5}

\bigskip
\bigskip

In this section we explain our algorithm, which allows us to classify all regular surfaces isogenous to a product of curves 
of unmixed type with $\chi(\OH(S))=2$. For the implementation of the algorithm the computer algebra system MAGMA \cite{magma} is used. 
The program is based on the program in the appendix of \cite{B.P}. After the explanation of the algorithm,  we discuss the output of the computations. Finally we give our classification result. \\

\bigskip

Let $S= (C_1 \times C_2)/G$ be a regular surface isogenous to a product of unmixed type with $\chi(\OH_S) = 2$. 
From $2 |G| = (g(C_1) -1)(g(C_2) -1)$, $\alpha(T_1(S)) = g(C_2)-1$ and $\alpha(T_2(S)) = g(C_1)-1$ for the attached types $T_i(S)$, it follows 
\[
|G| = \frac{1}{2} \alpha(T_1)\alpha(T_2). 
\]
According to the list in lemma \ref{tupel},  $\alpha(T_i) \leq 168$ and therefore $|G| \leq 14112$. 
The group order is also bounded in terms of the genera $g(C_i)$, in fact \\
$|Aut(C_i)| \leq 84(g(C_i)-1)$ due to Hurwitz' famous theorem. 
For small $g(C_i)$ there are better bounds. In Breuer's book \cite[p.91]{Br00} there is a table
which gives the maximum order of $|Aut(C_i)|$ in case $2 \leq g(C_i) \leq 48$.

\bigskip

\begin{definition}
Let  $A=[m_1,...,m_r] $ be an  r-tuple of integers  $m_i \ge 2 $.
The  \emph{polygonal group} $\mathbb T(m_1,...,m_r)$ is defined as  
\[
\mathbb T(m_1,...,m_r)=<t_1,...,t_r \quad | \quad t_1\cdot ... \cdot t_r = t_1^{m_1}=...=t_r^{m_r}=1 >.
\]
\end{definition}

A group $G$ admitting a spherical system of generators of type $[m_1,...,m_r]$ is a quotient of $\mathbb T(m_1,...,m_r)$. 
The following lemma will be used in the sequel. The proof of it is elementary and will be omitted. 

\bigskip

\begin{lemma}\label{abeliani}

Let $G$ be a group and  $H$ a quotient of $G$, then:
\begin{itemize}
\item $H^{ab}$ is a quotient of  $G^{ab}$.
\item The commutator subgroup  $[H,H]$ is a quotient of  $[G,G]$.
\item If $G$ is a quotient of $\mathbb T(2,3,7)$, then $G$ is perfect. 
\end{itemize}
\end{lemma}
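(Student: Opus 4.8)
The plan is to prove the three claims in sequence, since they are largely independent and the first two are standard facts about quotient groups. For the first claim, suppose $H$ is a quotient of $G$ via a surjective homomorphism $\pi\colon G \to H$. Since $H^{ab}=H/[H,H]$ and the composition $G \xrightarrow{\pi} H \to H^{ab}$ is surjective, it suffices to observe that this composite factors through $G^{ab}=G/[G,G]$. Indeed, any commutator $[g_1,g_2]\in[G,G]$ maps to $[\pi(g_1),\pi(g_2)]$, which is trivial in the abelianization of $H$; hence $[G,G]$ lies in the kernel, and we obtain an induced surjection $G^{ab}\to H^{ab}$. This is the universal property of abelianization applied to a map into an abelian group.

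For the second claim, I would again use surjectivity of $\pi\colon G \to H$. The image $\pi([G,G])$ is a subgroup of $H$; I claim it equals $[H,H]$. The inclusion $\pi([G,G])\subseteq [H,H]$ holds because generators $[g_1,g_2]$ of $[G,G]$ map to commutators in $H$. Conversely, since $\pi$ is surjective, every commutator $[h_1,h_2]$ in $H$ can be written as $[\pi(g_1),\pi(g_2)]=\pi([g_1,g_2])$ for suitable preimages $g_i$, so $[H,H]\subseteq \pi([G,G])$. Thus $[H,H]=\pi([G,G])$ exhibits $[H,H]$ as a homomorphic image of $[G,G]$, proving the claim.

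The third claim is where the specific group $\mathbb{T}(2,3,7)$ enters, and this is the only part requiring a genuine computation. The strategy is to apply the first claim: if $G$ is a quotient of $\mathbb{T}(2,3,7)$, then $G^{ab}$ is a quotient of $\mathbb{T}(2,3,7)^{ab}$, so it suffices to show that $\mathbb{T}(2,3,7)^{ab}$ is trivial. I would compute the abelianization directly from the presentation $\langle t_1,t_2,t_3 \mid t_1 t_2 t_3 = t_1^2 = t_2^3 = t_3^7 = 1\rangle$. Abelianizing turns the relation $t_1 t_2 t_3 = 1$ into $t_1 + t_2 + t_3 = 0$ in additive notation, together with $2t_1 = 0$, $3t_2 = 0$, $7t_3 = 0$. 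From $t_3 = -t_1 - t_2$ and the order relations one checks that the group generated is trivial: the orders $2,3,7$ are pairwise coprime, and the single linear relation forces each generator to vanish. The mild obstacle here is just bookkeeping with the coprimality — for instance, multiplying $t_1+t_2+t_3=0$ by appropriate integers and using $2t_1=3t_2=7t_3=0$ to isolate and kill each generator. Once $\mathbb{T}(2,3,7)^{ab}=0$, the first claim gives $G^{ab}=0$, i.e. $G=[G,G]$, which is precisely the statement that $G$ is perfect.

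I expect no serious difficulty anywhere: the first two claims are formal consequences of the universal properties of abelianization and commutator subgroups under surjections, and the third reduces via the first to a short finite computation with the abelianized presentation. The only point requiring care is confirming that the coprime orders together with the product relation collapse the abelianization entirely rather than leaving a nontrivial cyclic factor.
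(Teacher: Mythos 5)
Your proof is correct, and since the paper explicitly omits the proof of this lemma as ``elementary,'' your argument supplies exactly the standard reasoning the author had in mind: the first two claims via the surjection $\pi\colon G \to H$ carrying commutators to commutators, and the third by checking that $\mathbb{T}(2,3,7)^{ab}$ is trivial (the relation $t_1+t_2+t_3=0$ together with the pairwise coprime orders $2,3,7$ forces all generators to vanish) and then invoking the first claim. No gaps; the coprimality bookkeeping you flag does close as you expect, since $7t_i \equiv t_i$ modulo the order relations for $i=1,2$, which turns the product relation into $t_1+t_2=0$ and collapses everything.
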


\bigskip

We can now describe the algorithm briefly. We perform the following steps:

\bigskip
{\bf Step 1:}
The program computes the set $\mathcal N$ of types given in lemma \ref{tupel}. For every integer  $m \leq 14112$ we compute the set of all triples of the form 
$(m,T_1,T_2)$ up to permutation of $T_1$ and $T_2$, where $T_1, T_2 \in \mathcal N$ and $m=\frac{1}{2} \alpha(T_1)\alpha(T_2)$. 

\bigskip
{\bf Step 2:}
For every triple $(m,T_1,T_2)$ the script computes $g(C_2)= \alpha(T_1(S)) + 1$ and $g(C_1)=\alpha(T_2(S)) + 1$.
If $2 \leq g(C_i) \leq 48$ for at least one $i$, we check if $m$ is 
less or equal to the maximum group order of the automorphism group $Aut(C_i)$ allowed by Breuer's table.

\bigskip
{\bf Step 3:}
For every triple $(m,T_1,T_2)$ passing this test,
the script searches the list of groups of order $m$ for a  group admitting a spherical system of 
generators of type $T_1$ and one of type $T_2$.

\bigskip
{\bf Step 4:}
For each triple $(m,T_1,T_2)$ and each group $G$ of order $m$, admitting a spherical system of 
generators of type $T_1$ and of type $T_2$, the script computes the number of orbits of the action  
$\mathfrak T$ on $\mathcal B(G,T_1) \times \mathcal B(G,T_2)$, using the method explained after lemma \ref{pen11}.

\newpage

In {\bf Step 3} we face two computational difficulties:
\begin{itemize} 
\item 
In  MAGMA's \emph{Database of Small Groups} all groups of order $|G| \leq 2000$ are contained, except the groups of order $|G| = 1024$ (which can not occur). 
In the case $2001 \leq |G| \leq 14112$, there is no MAGMA \emph{Database} containing all groups of these orders. 
Only the perfect groups with $|G| \leq 50000$ are contained in  MAGMA's \emph{Database of Perfect Groups}.   
\item Groups of order
\[
|G| \in \{1920, 1152, 768, 512, 384, 256\}
\]
can occur. Despite the fact, that we have access to all groups of these orders, it is not efficient to search through all of them for  
spherical systems of generators, because the number of these groups is too high.
\footnote{Indeed there are $12.059.590$ groups $G$, such that $|G| \in \{1920, 1152, 768, 512, 384, 256\}$.}
\end{itemize}
Due to these difficulties we split the program into two \emph{main routines} namely $Mainloop1$ and $Mainloop2$. 

\begin{itemize}
\item 
	The function $Mainloop1$ treats the cases  where 
	\[
	|G| \geq 2001 ~~ \makebox{or} ~~  |G| \in \{1920, 1152, 768, 512, 384, 256\}. 
	\]
	\begin{itemize}
	\item[i)]
	The case $m=|G| \geq 2001$.  For all triples  $(m,T_1,T_2)$, we search the \emph{Database of Perfect Groups} for  a perfect group of order 
	$m$, admitting a spherical system of generators of type $T_1$ and  $T_2$. If neither $T_1=[2,3,7]$ nor $T_2=[2,3,7]$ we can not yet 
	decide 	if there is a non-perfect group of order $m$, admitting a spherical system of generators of type $T_i$ (cf. \ref{abeliani}).
	The script then saves the triple in the file $exceptional.txt$.  We have to investigate these cases with theoretical arguments (see 
	subsection \ref{xtc2}), and we will show these do not occur.  

	\item[ii)]
	The case $|G| \in \{1920, 1152, 768, 512, 384, 256\}$. 
	Each group $G$, admitting a spherical system of generators  of type $T_1=[n_1,...,n_r]$ and of type $T_2=[m_1,...,m_s]$  is a quotient of 
	\[
	\mathbb T(T_1) := \mathbb T(n_1,...,n_r) \quad  \makebox{and} \quad \mathbb T(T_2) := \mathbb T(m_1,...,m_s).
	\]
	According to lemma \ref{abeliani} the group $G^{ab}$ is a quotient of 
	\[
	\mathbb T(n_1,...,n_r)^{ab} \quad \makebox{and} \quad  \mathbb T(m_1,...,m_s)^{ab}.
	\]
	The script first computes all possible abelianizations of $G$ from the types. 
	With this step we can exclude the groups which don't have the right abelianization. 
	Then we search through the remaining groups 
	if there is a group, admitting  spherical systems of generators of type $T_i$. 
	\end{itemize}

\item
The function $Mainloop2$ treats the case where 
\[
|G| \leq 2000 ~~ \makebox{and} ~~ |G| \notin \{1920, 1152, 768, 512, 384, 256\}.
\]
The output is written in the file $loop2.txt$.
Due to the high  use of memory, if one of the types is $[2^8]$, we split the computation into two parts:
\begin{itemize}
\item[i)]
With the command $Mainloop2(n_1,n_2,1)$ the script classifies, in the sense of above, 
all surfaces where $n_1 \leq |G| \leq n_2$ and $T_i \neq [2^8]$.  
\item[ii)]
With the command $Mainloop2(n_1,n_2,0)$ the script classifies all surfaces where $n_1 \leq |G| \leq n_2$ and $T_1 = [2^8]$. 
 
\end{itemize}
Also in this main routine, if $T_1=[2^8]$ we have to treat some cases separately. Our workstation has not enough memory to compute the set 
$\mathcal B(G,[2^8])$, if 
\[
|G| \in \lbrace 168, 96, 48 \rbrace. 
\] 
The occurring triples $(n,T_1,T_2)$, where $n$ is one of the group orders above are marked by the script as exceptional. We treat these cases   in subsection \ref{xtc1}. 
\end{itemize}

Before we discuss the exceptional cases of the output, we explain some notation from group theory, that will be used in this and in the next section.

\begin{itemize}
\item
We use the following MAGMA notation:  $\langle a,b \rangle$ denotes the group of order $a$  having number $b$ in the 
database of Small Groups \cite{magma}. 
\item
The group  $U(4,2) \le Gl(4,\mathbb F_2) $ is defined to be the subgroup of upper triangle matrices $A=(a_{ij})$  with $a_{ii}=1$, for all $1 \leq i \leq 4$.
\item
The group $G(128,36):=\langle 128,36 \rangle $ is given in a polycyclic presentation:
\[
\langle 128,36 \rangle =
\left \langle g_1, ... ,g_7 \left| \left. \begin{matrix} g_1^2=g_4, & g_2^2=g_5, & g_2^{g_1}=g_2g_3 \\ 
               g_3^{g_1}=g_3g_6, & g_3^{g_2}=g_3g_7, & g_4^{g_2}=g_4g_6 \\ 
               g_5^{g_1}=g_5g_7 &  &  \\           
\end{matrix} \right\rangle \right. \right.
\]
Here $g_{i}^{g_{j}}$ means $g_{j}^{-1}g_{i}g_{j}$.
The squares of the generators $g_1, ... ,g_7$, which are not mentioned in the presentation are equal to $1$. 
If  $g_{i}^{g_{j}}= g_i$, this relation is omitted in the presentation. 
\end{itemize}

\bigskip

\subsection{Exceptional cases for Mainloop1:}\label{xtc2}

\bigskip
\bigskip
The following cases are skipped by the program, because the group order is greater than $2000$, and 
saved in the file $exceptional.txt$:

\begin{longtable}{p{2cm}p{2cm}p{2cm}p{1cm}p{1cm}} 
	$|G|$ & $T_1$ & $T_2$    \\ \hline
	$4608$  & $[ 2, 3, 8 ]$  &  $ [ 2, 3, 8 ]$\\
  $3840$  &  $[ 2, 3, 8 ]$ &  $[ 2, 4, 5 ]$ \\
  $3456$  &  $[ 2, 3, 8 ]$ &  $[ 2, 3, 9 ]$ \\
  $3200$  &  $[ 2, 4, 5 ]$ &  $[ 2, 4, 5 ]$ \\
	$2880$  &  $[ 2, 3, 9 ]$ &  $[ 2, 4, 5 ]$ \\
	$2880$  &  $[ 2, 3, 8 ]$ &  $[ 2, 3, 10 ]$\\
	$2592$  &  $[ 2, 3, 9 ]$ &  $[ 2, 3, 9 ]$ \\
  $2400$  &  $[ 2, 3, 10 ]$ & $[ 2, 4, 5 ]$ \\
  $2304$  &  $[ 2, 3, 8 ]$  & $[ 2, 4, 6 ]$ \\
  $2304$  &  $[ 2, 3, 8 ]$  & $[ 3, 3, 4 ]$ \\
  $2304$  &  $[ 2, 3, 8 ]$  & $[ 2, 3, 12 ]$ \\
  $2160$	 & $[ 2, 3, 9 ]$ &  $[ 2, 3, 10 ]$ \\
\end{longtable}

\bigskip
Our aim is to show, that the cases above can not occur. 

\begin{proposition} 
There are no finite groups  $G$ admitting a disjoint pair of spherical systems of generators $(A_1,A_2)$ of type $(T_1,T_2)$, where 
$|G|$, $T_1$ and $T_2$ are in the table above. 
\end{proposition}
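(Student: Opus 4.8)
The plan is to prove the stronger statement that no finite group $G$ whose order appears in the table admits spherical systems of generators of \emph{both} types $T_1$ and $T_2$ at once; the disjointness hypothesis on $(A_1,A_2)$ is then an extra constraint we do not even need. The only structural input I would use is Lemma \ref{abeliani}: if $G$ has a spherical system of type $T$, then $G$ is a quotient of the polygonal group $\mathbb T(T)$, so $G^{ab}$ is a quotient of $\mathbb T(T)^{ab}$. Since every $T_i$ in the table is a triangle type $[a,b,c]$, I would first compute these abelianizations explicitly.

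For a triangle type $[a,b,c]$ one has $\mathbb T(a,b,c)^{ab}=\mathbb Z^2/\langle (a,0),(0,b),(c,c)\rangle$, whose isomorphism type is read off from the Smith normal form of the relation matrix. Carrying this out gives $\mathbb T(2,3,8)^{ab}=\mathbb T(2,4,5)^{ab}=\mathbb T(2,3,10)^{ab}=\mathbb Z/2$, $\mathbb T(2,3,9)^{ab}=\mathbb T(3,3,4)^{ab}=\mathbb Z/3$, $\mathbb T(2,4,6)^{ab}=\mathbb Z/2\times\mathbb Z/2$ and $\mathbb T(2,3,12)^{ab}=\mathbb Z/6$. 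Because $G^{ab}$ must simultaneously be a quotient of $\mathbb T(T_1)^{ab}$ and of $\mathbb T(T_2)^{ab}$, I would intersect the two constraints row by row. In the four rows $([2,3,8],[2,3,9])$, $([2,3,9],[2,4,5])$, $([2,3,8],[3,3,4])$ and $([2,3,9],[2,3,10])$ the two abelianizations have coprime orders, forcing $G^{ab}=1$; in the remaining eight rows $G^{ab}$ is cyclic of order $2$ or $3$.

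For the four rows with $G^{ab}=1$ the group $G$ is perfect, so I would invoke the search of the \emph{Database of Perfect Groups} already carried out by the program (it covers all perfect groups of order $\le 50000$): no perfect group of the relevant orders $2160,2304,2880,3456$ carries spherical systems of both types, and these rows are excluded. For the eight non-perfect rows I would descend to $N:=[G,G]$, a normal subgroup of index $p\in\{2,3\}$ with $|N|=|G|/p$. As $N$ is the image in $G$ of the commutator subgroup $K_i:=[\mathbb T(T_i),\mathbb T(T_i)]$, it is a quotient of $K_i$; and $K_i$ is again a polygonal group whose signature I would compute from the orbifold Riemann--Hurwitz formula applied to the cover $\mathbb T(T_i)\to\mathbb T(T_i)^{ab}$. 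For example this yields $[\mathbb T(2,3,8),\mathbb T(2,3,8)]$ of signature $(0;3,3,4)$ and $[\mathbb T(2,4,6),\mathbb T(2,4,6)]$ of signature $(0;2,2,3,3)$, i.e. types $[3,3,4]$ and $[2^2,3^2]$, both lying in $\N$. Hence $N$ itself carries spherical systems of types in $\N$, now of the smaller order $|G|/p$, and I would either find $N$ perfect (and use the perfect database) or repeat the descent, the order dropping by a factor $p$ each time until it enters the range $\le 2000$ covered by $Mainloop2$ and, for the special orders $\{1920,1152,768,512,384,256\}$, by the abelianization-filtered search of $Mainloop1$~ii).

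The main obstacle is precisely this non-perfect part. The abelianization computation disposes of the four coprime rows at once, but for the other eight the commutator descent must be made genuinely rigorous: one has to determine correctly the branching of each cone point under $\mathbb T(T_i)\to\mathbb T(T_i)^{ab}$ (so that the signature of $K_i$, and with it the admissible type of $N$, is obtained without error), verify that every reduced order really is covered by an already-performed database search, and in particular treat the row $([2,3,8],[2,3,8])$ with $|G|=4608$, where a single descent reaches only order $2304>2000$ and a second descent is required. Once the descent is organized and the resulting smaller instances are matched against the existing computations, every row of the table is ruled out and the proposition follows.
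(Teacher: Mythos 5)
Your overall strategy---dropping the disjointness condition, invoking Lemma \ref{abeliani}, and descending through commutator subgroups of the polygonal groups---is the same as the paper's, and part of it is carried out correctly: your abelianizations are right, the identifications $[\mathbb T(2,3,8),\mathbb T(2,3,8)]\cong\mathbb T(3,3,4)$ and $[\mathbb T(2,4,6),\mathbb T(2,4,6)]\cong\mathbb T(2,2,3,3)$ are right, and your treatment of the four rows with coprime abelianizations ($|G|\in\{2160,2304,2880,3456\}$) is valid and in fact cleaner than a descent: coprimality forces $G$ perfect, and those triples were genuinely already searched in the \emph{Database of Perfect Groups} by Mainloop1 i).

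The gap is the termination of the descent in the remaining eight rows. You propose to stop once the order of $N=[G,G]$ (or of a deeper commutator subgroup) enters the range already treated by Mainloop2, or the special orders treated by Mainloop1 ii), and to ``match the smaller instances against the existing computations.'' But those computations do not cover the instances the descent produces, for two independent reasons. First, the program only ever searched triples $(m,T_1,T_2)$ satisfying $m=\frac12\alpha(T_1)\alpha(T_2)$; the descent violates this relation (from $(4608,[2,3,8],[2,3,8])$ you reach a group of order $2304$ that is a quotient of $\mathbb T(3,3,4)$, yet $\frac12\alpha([3^2,4])^2=1152\neq 2304$), so these order/type combinations were never enumerated, let alone searched. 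Second, and more fundamentally, the descent only tells you that $N$ is a \emph{quotient} of a polygonal group---the orders of the images of the generators may properly divide the $m_i$---whereas the Mainloop searches test for spherical systems of generators of \emph{exact} types, and only for \emph{pairs}: a negative answer to ``does some group of order $768$ admit spherical systems of types $[2,3,8]$ and $[4^3]$'' (one of the triples actually searched) says nothing about whether some group of order $768$ is a quotient of $\mathbb T(4,4,4)$. The paper closes the descent with precisely such a dedicated fact, Lemma 4.11 of \cite{bauer} (none of the $1090235$ groups of order $768$ is a quotient of $\mathbb T(4,4,4)$), verified by a separate MAGMA computation, together with Burnside's $p^aq^b$-theorem, which guarantees that $G'$ is solvable, hence not perfect, so that the descent can be iterated at all. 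Without a terminating ingredient of this kind your argument does not close; note also that you cannot simply keep descending until the order is small, because the commutator subgroups eventually stop being polygonal: $[\mathbb T(2,3,12),\mathbb T(2,3,12)]$ has signature $(1;2)$ and $[\mathbb T(4,4,4),\mathbb T(4,4,4)]$ is a torsion-free genus-$3$ surface group, at which point the abelianization constraint loses essentially all its force.
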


\begin{proof}
Our MAGMA code has already excluded the cases where $G$ is perfect. 
We only treat the case 
$|G|=4608$,  $T_1=[ 2, 3, 8 ]$  and  $T_2= [ 2, 3, 8 ]$.
The other cases can be excluded using similar methods.
The ideas we use are from \cite{bauer}. 
The group $G$ is a quotient of  $\mathbb T(2,3,8)$ 
and there is a surjective homomorphism $\phi^{ab}: \mathbb T(2,3,8)^{ab} \to G^{ab}$. Since $G$ is not perfect,
$G^{ab} \neq \lbrace 1_G \rbrace$.
Similarly, the commutator subgroup  $G'=[G,G]$ is also a quotient of  $\mathbb T(2,3,8)'=[\mathbb T(2,3,8),\mathbb T(2,3,8)]$. 
We have 
\[
\mathbb T(2,3,8)^{ab} \simeq \mathbb Z_2, ~~ \makebox{and} ~~  \mathbb T(2,3,8)' \simeq \mathbb T(3,3,4). 
\]
This implies $G^{ab} \simeq \mathbb Z_2 $, thus $|G'|=2304$ and the group $G'$ is a quotient of $\mathbb T(3,3,4)$. 
\[
\mathbb T(3,3,4)^{ab} \simeq \mathbb Z_3, ~~ \makebox{and} ~~ \mathbb T(3,3,4)' \simeq \mathbb T(4,4,4). 
\]
Since $|G'|=2^8 \cdot 3^2$ the group $G'$ is solvable, due to Burnsides theorem \cite{burn1}. We have $G'^{ab} \simeq \mathbb Z_3$, thus: $|G''|=768$ and  $G''$ is a quotient of $\mathbb T(4,4,4)$.
This is impossible according to the following computational fact, which can be verified with MAGMA. 
\bigskip
\end{proof}

\begin{lemma}\cite[Lemma 4.11]{bauer}
There are $1090235$ groups of order $768$. None of them is a quotient of $\mathbb T(4,4,4)$.
\end{lemma}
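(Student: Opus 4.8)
The statement has two parts: the enumeration ``there are exactly $1090235$ groups of order $768$'', and the assertion that none is a quotient of $\mathbb T(4,4,4)$. The first part is not something I would reprove; it is the output of the complete enumeration of finite groups of small order underlying MAGMA's \emph{Database of Small Groups}, and I would simply invoke it (note $768 \le 2000$ and $768 \ne 1024$, so all such groups are indeed available). The real content is the second part, and I would begin by reformulating it in the concrete form used throughout the paper: a group $G$ is a quotient of $\mathbb T(4,4,4)$ if and only if there are $a,b \in G$ with $\langle a,b \rangle = G$ and $a^4 = b^4 = (ab)^4 = 1$ (take $c = (ab)^{-1}$). Thus I must show that no group of order $768$ carries such a generating pair.

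First I would exploit the abelianization, exactly as in the preceding proposition. Abelianizing the presentation of $\mathbb T(4,4,4)$ and eliminating $t_3 = (t_1 t_2)^{-1}$ gives
\[
\mathbb T(4,4,4)^{ab} \simeq \langle t_1, t_2 \mid 4t_1 = 4t_2 = 0 \rangle \simeq (\mathbb Z/4)^2,
\]
so by Lemma \ref{abeliani} every quotient $G$ must have $G^{ab}$ a $2$-generated quotient of $(\mathbb Z/4)^2$; in particular $G^{ab}$ is a $2$-group of order at most $16$ and exponent dividing $4$. Since $768 = 2^8 \cdot 3$, any such $G$ is solvable by Burnside's theorem, hence $G^{ab} \ne \{1_G\}$, and because $G^{ab}$ is a $2$-group we also get $3 \mid |G'|$. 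This single criterion already discards the overwhelming majority of the $1090235$ groups: I would compute $G^{ab}$ for each group of order $768$ in the database and retain only those whose abelianization is one of $\mathbb Z/2,\ \mathbb Z/4,\ (\mathbb Z/2)^2,\ \mathbb Z/4 \times \mathbb Z/2,\ (\mathbb Z/4)^2$.

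For the (far smaller) list of survivors I would then search directly for a generating triple of type $[4,4,4]$. To keep the per-group cost low I would enumerate elements of order dividing $4$ up to conjugacy, fix such an $a$, let $b$ run over representatives modulo $C_G(a)$, and test whether $\langle a,b\rangle = G$ and $(ab)^4 = 1$, discarding redundant cases via the $\mathrm{Aut}(G)$- and Hurwitz-actions of Section \ref{section4}; verifying computationally that this search is empty for every survivor proves the lemma. The main obstacle is purely one of scale: the naive strategy of looping over all ${\sim}10^6$ groups of order $768$ and inside each over all pairs of elements is infeasible, so the whole argument hinges on the abelianization filter to prune the group list before any triple search is attempted. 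I should add that a purely theoretical finish looks unlikely: a Riemann--Hurwitz count shows that $\mathbb T(4,4,4)'$ is a genus-$3$ surface group (the index-$16$ kernel of $\mathbb T(4,4,4) \to (\mathbb Z/4)^2$ is torsion-free, and $2g - 2 = 16 \cdot \Theta([4,4,4]) = 4$ forces $g = 3$), and surface groups possess abundant finite quotients, so the commutator-series bookkeeping that suffices for the other entries of the table does not obviously close the case $768$ --- which is precisely why it is singled out as a computational lemma.
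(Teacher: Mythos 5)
Your proposal is correct and takes essentially the same approach as the paper: the paper gives no theoretical proof of this lemma, citing it as a computational fact to be verified with MAGMA (from \cite{bauer}, Lemma 4.11), and your reformulation as the nonexistence of generators $a,b$ with $a^4=b^4=(ab)^4=1$, pruned by the $(\mathbb Z/4)^2$-abelianization filter before a direct search, is precisely a sensible organization of that computation. Your closing remark that no purely group-theoretic shortcut is expected --- because $\mathbb T(4,4,4)'$ is a genus-$3$ surface group with plentiful finite quotients --- correctly identifies why this case is isolated as a computational lemma rather than handled by the commutator-series arguments used for the other orders.
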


\bigskip
\subsection{Exceptional cases for Mainloop2:}\label{xtc1}

\bigskip
\bigskip

Here $T_1=[2^8]$ and $T_2 \in \mathcal N$. We have $|G|= \frac{1}{2} \alpha(T_1) \alpha(T_2) = \alpha(T_2) \leq 168$. Since our workstation has not enough memory to compute the set $\mathcal B(G,[2^8])$  if $|G| \in \lbrace 168, 96, 48 \rbrace$, 
these cases are marked  as exceptional. 
We receive the following output:
\vspace{0.5cm}

\begin{longtable}{p{2cm}p{2.5cm}p{3cm}p{3cm}p{3cm}p{3cm}}
$T_2$ & $G$ & No. of $\mathfrak T$-orbits  &   \\ \hline

$[2,3,7]$ & $\langle 168,42 \rangle$ & $0$ & exceptional case  \\
$[2,3,8]$ & $\langle 96,64 \rangle$ & $0$ & exceptional case  \\
$[2,4,6]$ & $\langle 48,48 \rangle$ & $2$  & exceptional case  \\
$[3,4^2]$ & $\langle 24,12 \rangle$ &  $1$  &    \\
$[2^3,4]$ & $\langle 16,11 \rangle$ & $2$  &     \\
$[2^5]$   & $\langle 8,5   \rangle$ & $1$  &     \\

\end{longtable}

Next, we will investigate the exceptional cases. 

\begin{proposition}
The group $\langle 168, 42 \rangle $ has no disjoint pair of spherical systems of generators of type $([2^8],[2,3,7])$.
\end{proposition}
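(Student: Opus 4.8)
The plan is to exploit the very rigid structure of $G=\langle 168,42\rangle$, which is the simple group $PSL(2,7)$ of order $168$, rather than to attempt any explicit search among spherical systems. The decisive observation will be that in this group \emph{every involution is conjugate to every other}, i.e. there is a single conjugacy class of elements of order two. I would first record this by listing the conjugacy classes of $G$: besides the identity there is exactly one class of involutions (of size $21$), one class of order $3$ (size $56$), one class of order $4$ (size $42$), and two classes of order $7$ (each of size $24$), totalling $1+21+56+42+24+24=168$. Only the uniqueness of the involution class is actually needed below, and this is the one structural fact I would want to confirm carefully (it can be read off from the character table of $PSL(2,7)$ or verified directly in MAGMA).

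With this in hand the argument is immediate and purely formal. Suppose for contradiction that a disjoint pair $(A_1,A_2)$ of spherical systems of type $([2^8],[2,3,7])$ existed. Write $A_2=(h_1,h_2,h_3)$, so that $\mathrm{ord}(h_1)=2$. By definition
\[
\Sigma(A_2)=\bigcup_{h\in G}\bigcup_{j\in\mathbb Z}\bigcup_{i=1}^{3}\{h\,h_i^{\,j}h^{-1}\}
\]
contains in particular every conjugate of $h_1$. Since $h_1$ is a nontrivial involution and $G$ has only one class of involutions, $\Sigma(A_2)$ contains \emph{all} involutions of $G$.

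Now write $A_1=(g_1,\dots,g_8)$; by the definition of the type $[2^8]$ each $g_i$ has order exactly $2$, hence is a nontrivial involution. Trivially $g_1\in\Sigma(A_1)$ (take $h=1_G$, $j=1$, $i=1$), and being an involution $g_1$ lies in $\Sigma(A_2)$ as well. Therefore $g_1\in\Sigma(A_1)\cap\Sigma(A_2)$ with $g_1\neq 1_G$, contradicting the disjointness condition $\Sigma(A_1)\cap\Sigma(A_2)=\{1_G\}$. Hence no such pair exists.

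The only real content, and thus the main point to secure, is the conjugacy-class structure of $G=PSL(2,7)$ — specifically the uniqueness of the involution class; everything afterwards is a one-line set-theoretic incompatibility. It is worth remarking that this obstruction is entirely structural: it never uses that $A_1$ has length $8$ or that the second type is precisely $[2,3,7]$, only that the second type contains an entry equal to $2$. So the same argument would rule out disjointness of $[2^k]$ against any type involving an involution for this particular $G$, which is why the case could safely be settled by hand despite the memory limitation that prevented the direct computation of $\mathcal B(G,[2^8])$.
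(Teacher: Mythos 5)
Your proof is correct and follows exactly the paper's argument: the paper likewise observes (via MAGMA) that $\langle 168,42\rangle$ has a single conjugacy class of elements of order $2$, and concludes immediately that $\Sigma(A_1)\cap\Sigma(A_2)$ must be nontrivial for any pair of the given types. You have merely spelled out the one-line deduction in more detail and recorded the full class structure of $PSL(2,\mathbb F_7)$, which is accurate.
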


\begin{proof}
A  MAGMA computation shows, that this group has only one conjugacy class of elements of order $2$. 
Hence, for every pair $(A_1,A_2)$ of generators of type $([2^8],[2,3,7])$ the intersection 
$\Sigma(A_1) \cap \Sigma(A_2)$ is nontrivial. 
\end{proof}

\begin{proposition}
The group $\langle 96,64 \rangle$ has no disjoint pair of spherical systems of generators of type  $([2^8],[2,3,8])$.
\end{proposition}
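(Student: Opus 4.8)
The plan is to avoid computing the huge set $\mathcal B(G,[2^8])$ — which is exactly why this case was flagged as exceptional — and instead to obstruct the \emph{disjointness} condition by controlling which involutions can enter $\Sigma(A_2)$. Write $A_2=(h_1,h_2,h_3)$ with $h_1$ of order $2$, $h_2$ of order $3$ and $h_3$ of order $8$. Among the powers $h_i^j$ the only elements of order two are $h_1$ and $h_3^4$, so the set of involutions contained in $\Sigma(A_2)$ is precisely the union of the $G$-conjugacy classes of $h_1$ and of $h_3^4$. A system $A_1$ of type $[2^8]$ consists entirely of involutions which generate $G$; hence the disjointness condition $\Sigma(A_1)\cap\Sigma(A_2)=\{1_G\}$ forces every entry of $A_1$ to avoid both of these classes.

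First I would record the structural input coming from the types. Since $G$ admits a spherical system of type $[2,3,8]$ it is a quotient of $\mathbb T(2,3,8)$, and, as in subsection \ref{xtc2}, one has $\mathbb T(2,3,8)^{ab}\simeq\mathbb Z_2$; because $G$ is not perfect, Lemma \ref{abeliani} gives $G^{ab}\simeq\mathbb Z_2$. Under the quotient map $G\to G^{ab}\simeq\mathbb Z_2$ the element $h_3^4$ maps to $0$, so $h_3^4\in G'=[G,G]$.

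Next I would run an abelianization counting argument on $A_1=(g_1,\dots,g_8)$. The images $\bar g_1,\dots,\bar g_8$ in $G^{ab}\simeq\mathbb Z_2$ generate $\mathbb Z_2$ and satisfy $\sum_i \bar g_i=0$ (from $g_1\cdots g_8=1_G$). Generation forces at least one image to be nonzero, while vanishing of the sum forces the number of nonzero images to be even; hence at least two of the $\bar g_i$ vanish, i.e. at least two entries of $A_1$ lie in $G'$. These are involutions contained in $G'$.

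The proof is then completed by a single computational fact, verified in MAGMA: the commutator subgroup $G'$ of $\langle 96,64\rangle$ contains a unique $G$-conjugacy class of involutions, and it is exactly the class of $h_3^4$. Granting this, the (at least two) entries of $A_1$ lying in $G'$ are conjugate to $h_3^4\in\Sigma(A_2)$, so $\Sigma(A_1)\cap\Sigma(A_2)\ne\{1_G\}$, contradicting disjointness; no such pair can exist. The main obstacle is precisely this last piece of group-theoretic bookkeeping: one must confirm that all involutions of $G$ lying in $G'$ form a single conjugacy class coinciding with the square of the order-$8$ generators. Should $G'$ instead carry several involution classes, the argument would require showing that the involutions avoiding the classes of $h_1$ and $h_3^4$ generate only a proper subgroup of $G$, which can again be checked directly in MAGMA without ever enumerating $\mathcal B(G,[2^8])$.
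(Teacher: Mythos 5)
Your reduction of disjointness to a statement about involutions is sound: the involutions in $\Sigma(A_2)$ are precisely the $G$-conjugates of $h_1$ and of $h_3^4$, and indeed $G^{ab}\simeq\mathbb Z_2$ (so $h_3^4\in G'$), since $G$ is a non-perfect quotient of $\mathbb T(2,3,8)$. The gap is in the abelianization count. From ``the images $\bar g_1,\dots,\bar g_8$ generate $\mathbb Z_2$'' and ``$\sum_i\bar g_i=0$'' you may conclude that the number of entries with \emph{nonzero} image is even and at least $2$; you may \emph{not} conclude that at least two images vanish. With eight entries nothing prevents all eight images from being nonzero ($8\cdot 1=0$ in $\mathbb Z_2$), i.e. every entry of $A_1$ may lie outside $G'$. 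This loophole is not hypothetical here: $G=\langle 96,64\rangle$ has exactly two classes of involutions, $K_2\subset G'$ (the class of $h_3^4$, of length $3$, with $\langle K_2\rangle\simeq\mathbb Z_2\times\mathbb Z_2$) and $K_1\not\subset G'$; using the structure $G\simeq(\mathbb Z_4\times\mathbb Z_4)\rtimes\mathfrak S_3$ one finds four involutions $a,b,c,d\in K_1$ generating $G$, and then $(a,a,b,b,c,c,d,d)$ is a spherical system of type $[2^8]$ with \emph{no} entry in $G'$. So the step ``at least two entries of $A_1$ lie in $G'$'' is not merely unproven --- it is false, and the case it was meant to exclude is exactly the one that needs work.

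Consequently your main argument cannot be repaired by only knowing the involution classes inside $G'$; one must also control the class of $h_1$, which is what the paper's proof does. There, MAGMA shows that every spherical system $(g_1,g_2,g_3)$ of type $[2,3,8]$ has $g_1\in K_1$ (never in $K_2$), so $K_1\subseteq\Sigma(A_2)$; on the other hand a $[2^8]$ system cannot have all entries in $K_2$, since $|\langle K_2\rangle|=4<|G|$, so it must contain an element of $K_1$ --- note this uses the size of $\langle K_2\rangle$, not a parity argument --- and disjointness fails. Your closing fallback remark (check that involutions avoiding the classes of $h_1$ and $h_3^4$ generate a proper subgroup) is in substance this correct argument, and it, rather than the parity count, is what the proof actually requires; as stated, though, your primary argument does not establish the proposition.
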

\begin{proof}
A  MAGMA calculation shows, that this group has two conjugacy classes of elements of order $2$. We denote them by $K_1$ and $K_2$. 
We have $|K_2|=3$  and $\langle K_2 \rangle \simeq \mathbb Z_2 \times \mathbb Z_2$.
Since  $|\langle K_2 \rangle | = 4 $, there is no spherical system of generators $A_1=(h_1, ..., h_8)$ of 
$G$ of type $[2^8]$, with $h_i \in K_2$ for all $1 \leq i \leq 8$. 
Every spherical system of generators of $G$ contains elements of $K_1$. 
A further MAGMA calculation shows, that there is no spherical system of generators $A_2=(g_1,g_2,g_3)$ of $G$ of type $[2,3,8]$, 
with $g_1\in K_2$. 
\end{proof}

\begin{proposition}
The group $\langle 48,48 \rangle$ admits disjoint pairs of spherical systems of generators of type  $([2^8],[2,4,6])$. The number of $\mathfrak T$-orbits is two.
\end{proposition}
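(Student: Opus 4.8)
The statement to be proved is that the group $\langle 48,48 \rangle$ admits disjoint pairs of spherical systems of generators of type $([2^8],[2,4,6])$, and that the number of $\mathfrak T$-orbits among such disjoint pairs is exactly two. The plan is to verify this claim directly by an explicit computation with the concrete group $G = \langle 48,48 \rangle$, following exactly the algorithmic scheme described after Lemma~\ref{pen11}. Since the previous two propositions established that the analogous claims for $\langle 168,42\rangle$ and $\langle 96,64\rangle$ fail, and those were handled by inspecting conjugacy classes of involutions, the key point here is that $\langle 48,48\rangle$ has \emph{enough} suitable conjugacy classes for a disjoint pair to exist. So first I would record, via MAGMA, the conjugacy classes of elements of order $2$, $4$, and $6$ in $G$, and check that the involution classes are rich enough that a spherical system of type $[2^8]$ can avoid the stabilizer set of a system of type $[2,4,6]$.

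The main computation proceeds in three steps. First I would construct the set $\mathcal B(G,[2,4,6])$ of spherical systems of generators of type $[2,4,6]$ and compute a set $\mathfrak R_2$ of representatives under the Hurwitz action of $\boldsymbol B_3$. Second I would construct $\mathcal B(G,[2^8])$ and compute a set $\mathfrak R_1$ of representatives under the full action $\mathfrak T_1 = \boldsymbol B_8 \times Aut(G)$; this is the step singled out in the excerpt as memory-intensive, which is precisely why $|G|=48$ was flagged as exceptional and is being treated by hand rather than inside the automated loop. Third, for each pair $(A_1,A_2) \in \mathfrak R_1 \times \mathfrak R_2$ I would compute the stabilizer sets $\Sigma(A_1)$ and $\Sigma(A_2)$ and retain exactly those pairs with $\Sigma(A_1) \cap \Sigma(A_2) = \{1_G\}$, forming the set $\mathfrak R$. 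By Lemma~\ref{pen11}, every $\mathfrak T$-orbit of disjoint pairs has at least one representative in $\mathfrak R$, so $|\mathfrak R|$ is an upper bound for the number $n$ of orbits, and existence of a disjoint pair follows as soon as $\mathfrak R \neq \emptyset$.

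To pin down $n=2$ exactly, I would then apply the lower-bound criterion from the discussion after Lemma~\ref{pen11}: two surviving pairs $(A_1,A_2)$ and $(B_1,B_2)$ lie in distinct $\mathfrak T$-orbits whenever $A_1 \neq B_1$, or whenever $A_2$ and $B_2$ are in distinct $\mathfrak T_2$-orbits. If the upper bound from $|\mathfrak R|$ and this lower bound coincide at $2$, we are done; if they do not, the fallback is to swap the roles of $T_1$ and $T_2$ and recompute, or in the worst case to directly identify the representatives in the smaller set under the full $\mathfrak T$-action. I expect the main obstacle to be the second step, the enumeration and Hurwitz/$Aut(G)$-reduction of $\mathcal B(G,[2^8])$: a tuple of eight involutions generating $G$ with trivial product gives a large raw set, and it is the size of this set (not any conceptual difficulty) that forces the separate treatment. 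Since $|G|=48$ is small, however, the computation is feasible on its own, and I anticipate the upper and lower bounds will agree and yield $n=2$ without needing the exchange trick. Note that this is the single example referenced in the earlier footnote where $T_1 = T_2$ fails and the exchange-of-curves $\mathbb{Z}_2$-action is \emph{not} relevant, since here the two types $[2^8]$ and $[2,4,6]$ are distinct.
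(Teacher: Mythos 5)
Your overall framework is the right one --- it is the generic algorithm of Lemma \ref{pen11} --- but it misses the one idea that this proposition actually requires, and the step you yourself single out as the bottleneck is precisely the step that fails. The case $|G|=48$ with types $([2^8],[2,4,6])$ is flagged as exceptional exactly because the set $\mathcal B(G,[2^8])$ could \emph{not} be computed: $G$ has $19$ involutions, so the raw search space has order $19^8$, and the paper states explicitly that the workstation has not enough memory to build $\mathcal B(G,[2^8])$ for $|G| \in \{168,96,48\}$. Your proposal acknowledges this obstruction and then dismisses it (``since $|G|=48$ is small, the computation is feasible on its own''), which is an unjustified assertion contradicting the very premise under which this case is treated by hand; simply re-running the generic algorithm is not a proof here.

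The missing idea is to use disjointness to shrink the search space \emph{before} any enumeration. The involutions of $G$ fall into five conjugacy classes $K_1,\dots,K_5$ of lengths $1,3,3,6,6$, and a short MAGMA check shows that every $(g_1,g_2,g_3) \in \mathcal B(G,[2,4,6])$ has $g_1 \in K_4 \cup K_5$. Since $\Sigma(A)$ contains all conjugates of the entries of $A$, a system of type $[2^8]$ that is disjoint from some $[2,4,6]$-system must avoid the whole class $K_4$ or the whole class $K_5$. Hence it suffices to compute the subsets $\mathcal B_l := \lbrace (h_1,\dots,h_8) \in \mathcal B(G,[2^8]) \mid h_i \notin K_l \rbrace$ for $l=4,5$; these have $9{,}213{,}120$ elements each and fit into memory, and the Hurwitz action restricts to each $\mathcal B_l$ because it acts by permutation and conjugation of entries. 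This yields $10$ Hurwitz representatives for each $\mathcal B_l$ and two Hurwitz classes in $\mathcal B(G,[2,4,6])$, giving four candidate disjoint pairs in total. Moreover --- contrary to your expectation that the upper and lower bounds from Lemma \ref{pen11} will simply coincide --- these four pairs must then be identified under the full action $\mathfrak T$ (this is what the paper's function ``Orbi'' does), and they collapse to exactly two orbits; so what you call a fallback is in fact mandatory, in part because the representatives of $\mathcal B_4$ and $\mathcal B_5$ were only reduced modulo the restricted Hurwitz action, not modulo $Aut(G)$. Your final remark is correct: since $[2^8] \neq [2,4,6]$, the exchange-of-curves $\mathbb Z_2$-action plays no role in this case.
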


\begin{proof}
The group $G$ has $19$ elements of order $2$, they are contained in $5$ conjugacy classes. We denote them by $K_1, ... ,K_5$ and the set of elements of order two by $M$.

\vspace{0.5cm}

\begin{longtable}{p{2cm}p{3,5cm}p{2.5cm}p{20cm}}
class & rep & length \\ \hline
$K_1$ & $G.2$     & $1$ \\
$K_2$ & $G.4$     & $3$ \\
$K_3$ & $G.2*G.4$  & $3$ \\
$K_4$ & $G.1$     & $6$ \\
$K_5$ & $G.1*G.2$ & $6$ \\
\end{longtable}

\vspace{0.5cm}

\noindent 
A  MAGMA calculation shows, that $g_1 \in K_4 \cup K_5 $ for each spherical system of generators  $(g_1,g_2,g_3) \in \mathcal B(G,[2,4,6])$. Since we are interested in disjoint pairs of spherical systems of generators, it is not necessary to compute the whole set 
$\mathcal B(G,[2^8])$. All elements  $(h_1, ... ,h_8) \in \mathcal B(G,[2^8])$, which contain some $h_i \in K_4$, 
as well as some $h_j \in K_5$ are irrelevant. For each $1\leq l \leq 5$ we define  subsets  
\[
\mathcal B_l:=\lbrace [h_1, ... ,h_8] \in \mathcal B(G,[2^8]) ~ \big\vert  ~ h_i \in M \setminus K_l \rbrace
\]
of $\mathcal B(G,[2^8])$ 
and compute them in the cases  $l=4$ and $l=5$.
We have  $|\mathcal B_4|=|\mathcal B_5|= 9.213.120$. Since the Hurwitz action acts via conjugation and permutation of elements, we can restrict it to $\mathcal B_4$ and to $\mathcal B_5$. 
Next we compute for each orbit of the restricted actions a representative. 
The sets of representatives are denoted by $\mathcal R_4$ and $\mathcal R_5$. We have $|\mathcal R_4|=|\mathcal R_5| =10$.
All elements in $\mathcal B(G,[2,4,6])$ are contained in two orbits of the  Hurwitz action. We denote by $A_1$ and $A_2$ 
two representatives of these orbits. 
There are two disjoint pairs in $\lbrace A_1, A_2 \rbrace \times \mathcal R_4$, and two disjoint pairs in 
$\lbrace A_1, A_2 \rbrace \times \mathcal R_5$.
According to \ref{pen11}, we have at least one representative for each  $\mathfrak T$-orbit. 
Using the function "Orbi" we can identify the pairs above, which are $\mathfrak T$-equivalent.  We find two equivalence classes. 
To verify the above computations a source code can be found at
\url{http://www.staff.uni-bayreuth.de/~ bt300503/} in the file $script.txt$.
\end{proof}

\bigskip

\begin{remark}\label{exchange}

\end{remark}

From the output files we can see that there is exactly one occurrence with $T_1(S)=T_2(S)$ and $n \geq2$. In this case 
$G= G(128,36)$. The types are $T_i=[4,4,4]$ and $n=2$. 
We denote by $(A_1,A_2)$ and $(B_1,B_2)$ the representatives for the $\mathfrak T$-orbits from the output file $loop2.txt$.
It remains to check if $(A_2,A_1)$ and $(B_1,B_2)$ are in the same $\mathfrak T$-orbit. 
A MAGMA computation shows that this is not the case. The source code for this computation is available at
\url{http://www.staff.uni-bayreuth.de/~ bt300503/} in the file $script.txt$.

\bigskip

Now we can give our main theorem, which implies in particular theorem \ref{families}.

\newpage

\begin{thm} \label{Haupterg}
Let $S=(C_1 \times C_2) / G$ 
be a regular surface isogenous to a product of unmixed type with $\chi(\OH_S)= 2$. Then 
$g(C_1)$, $g(C_2)$, the group $G$ and the corresponding types $T_1(S)$, $T_2(S)$ are:  
\begin{longtable}{p{1.2cm}p{1.2cm}p{3cm}p{2.5cm}p{2cm}p{2cm}p{1cm}}
$g(C_1)$ & $g(C_2)$ & $G$ & $Id$ & $T_1(S)$ & $T_2(S)$ & $n$ \\ \hline
\\
$17$ & $43$ & $PSL(2, \mathbb F_7) \times \mathbb Z_2$  & $\langle 336,209 \rangle$ & $[2,3,14]$ & $[4^3]$ & $2$ \\
$49$ & $9$ & $(\mathbb Z_2)^3 \rtimes _ \varphi \mathfrak S_4$ & $\langle192,955 \rangle$ & $[2^2,4^2]$ & $[2,4,6]$ & $2$ \\
$49$ & $8$  & $ PSL(2, \mathbb F_7) $ & $\langle 168,42 \rangle$ & $[7^3]$ & $[3^2,4]$ & $2$ \\
$17$ & $22$  & $ PSL(2, \mathbb F_7) $ & $\langle 168,42 \rangle$ & $[3^2,7]$ & $[4^3]$ & $2$  \\
$5$ & $81$  & $(\mathbb Z_2)^4 \rtimes _ \varphi \mathcal D_5$ & $\langle 160,234 \rangle$ & $[2,4,5]$ & $[4^4]$ & $5$ \\
$17$ & $17$  & $ G(128,36)$ & $\langle 128,36 \rangle$ & $[4^3]$ & $[4^3]$ & $2$  \\
$9$ & $31$ & $ \mathfrak  S_5 $ & $\langle 120,34 \rangle$ & $[2,5,6]$ & $[2^2,4^2]$ & $1$ \\
$5$ & $49$  & $(\mathbb Z_2)^4 \rtimes _ \varphi \mathcal D_3 $ & $\langle 96,195 \rangle$ & $[2,4,6]$ & $[4^4]$ & $1$  \\
$25$ & $9$  & $(\mathbb Z_2)^4 \rtimes _ \varphi \mathcal D_3$ & $\langle 96,227 \rangle$ & $[2^5]$ & $[3,4^2]$ & $1$  \\
$9$ & $17$  & $ (\mathbb Z_2)^3 \rtimes _ \varphi \mathcal D_4$ & $\langle 64,73 \rangle$ & $[2^3,4]$ & $[2^2,4^2]$ & $1$  \\
$9$ & $17$  & $ U(4,2)$ & $\langle 64,138 \rangle$ & $[2^3,4]$ & $[2^2,4^2]$ & $1$ \\
$13$ & $11$   & $\mathcal A_5$ & $\langle 60,5 \rangle$ & $[5^3]$ & $[2^2,3^2]$ & $1$ \\
$41$ & $4$  & $\mathcal A_5 $ & $\langle 60,5 \rangle$ & $[3^5]$ & $[2,5^2]$ & $2$ \\
$9$ & $16$  & $\mathcal A_5$ & $\langle 60,5 \rangle$ & $[3,5^2]$ & $[2^5]$ &  $2$  \\
$5$ & $31$  & $\mathcal A_5 $ & $\langle 60,5 \rangle$ & $[3^2,5]$ & $[2^6]$ & $1$  \\
$5$ & $25$  & $ \mathfrak  S_4 \times \mathbb Z_2 $ & $\langle 48,48 \rangle$ & $[2^3,3]$ & $[4^4]$ & $1$ \\
$9$ & $13$  & $ \mathfrak S_4 \times \mathbb Z_2 $ & $\langle 48,48 \rangle$ & $[2^3,6]$ & $[2^2,4^2]$ & $1$ \\
$13$ & $9$ & $\mathfrak  S_4 \times \mathbb Z_2 $ & $\langle 48,48 \rangle$ & $[2^5]$ & $[4^2,6]$ & $1$ \\
$3$ & $49$  & $\mathfrak  S_4 \times \mathbb Z_2 $ & $\langle 48,48 \rangle$ & $[2,4,6]$ & $[2^8]$ & $2$ \\
$9$ & $9$ & $ (\mathbb Z_2)^3 \rtimes _ \varphi \mathbb Z_4 $ & $\langle 32,22 \rangle$ & $[2^2,4^2]$ & $[2^2,4^2]$ & $1$ \\
$9$ & $9$  & $ \mathcal D_4 \times (\mathbb Z_2)^2$ & $\langle 32,46 \rangle$ & $[2^5]$ & $[2^5]$ & $1$ \\
$17$ & $5$ & $(\mathbb Z_2)^4 \rtimes _ \varphi \mathbb Z_2 $ & $\langle 32,27 \rangle$ & $[2^3,4^2]$ & $[2^3,4]$ & $3$  \\ 
$9$ & $9$  & $(\mathbb Z_2)^4 \rtimes _ \varphi \mathbb Z_2 $ & $\langle 32,27 \rangle$ & $[2^2,4^2]$ & $[2^5]$ & $1$  \\ 
$5$ & $13$  & $\mathfrak S_4  $ & $\langle 24,12 \rangle$  & $[2^2,3^2]$ & $[4^4]$ & $1$ \\
$3$ & $25$  & $\mathfrak S_4$ & $\langle 24,12 \rangle$  & $[3,4^2]$ & $[2^8]$ & $1$ \\
$5$ & $9$   & $\mathcal D_4 \times \mathbb Z_2$ & $\langle 16,11 \rangle$ & $[2^5]$ & $[2^3,4^2]$ & $1$ \\
$9$ & $5$  & $(\mathbb Z_2)^2 \rtimes _ \varphi \mathbb Z_4 $ & $\langle 16,3 \rangle$ & $[2^3,4^2]$ & $[2^2,4^2]$ & $2$ \\ 
$9$ & $5$  & $(\mathbb Z_2)^4 $ & $\langle 16,14 \rangle$ & $[2^6]$ & $[2^5]$ &  $2$ \\
$3$ & $17$  & $\mathcal D_4 \times \mathbb Z_2 $ & $\langle 16,11 \rangle$ & $[2^3,4]$ & $[2^8]$ & $2$  \\
$7$ & $4$  & $(\mathbb Z_3 )^2$ & $\langle 9,2 \rangle$ & $[3^5]$ & $[3^4]$ & $1$  \\
$5$ & $5$  & $(\mathbb Z_2)^3$ & $\langle 8,5 \rangle$ & $[2^6]$ & $[2^6]$ & $1$ \\
$3$ & $9$ & $(\mathbb Z_2)^3 $ & $\langle 8,5 \rangle$ & $[2^5]$ & $[2^8]$ & $1$ \\
\end{longtable}
\noindent
Each row in the table corresponds to a union of connected components of the \emph{Gieseker moduli space} of surfaces of general type with  
$K_S^2=16$ and $=2$. The number $n$ of these components is given in the last column. 
\end{thm}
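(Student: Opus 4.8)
The plan is to exploit the purely group-theoretic description furnished by Riemann's existence theorem: a regular surface $S=(C_1\times C_2)/G$ isogenous to a product of unmixed type with $\chi(\OH_S)=2$ is equivalent to the datum of a finite group $G$ together with a disjoint pair of spherical systems of generators $(A_1,A_2)$ of some type $(T_1,T_2)$, and distinct such data, up to the action $\mathfrak{T}$ and the exchange of factors, give distinct connected components of the moduli space by the component criterion of \cite{bauer}. Since $q(S)=0$ forces both quotients $C_i/G$ to be $\mathbb{P}^1_{\mathbb{C}}$, Theorem \ref{condisogenous} applies and shows that $T_1,T_2$ lie in the finite set $\mathcal N$ of Lemma \ref{tupel}. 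The identities $\alpha(T_1)=g(C_2)-1$, $\alpha(T_2)=g(C_1)-1$ and the relation $|G|=\frac{1}{2}\alpha(T_1)\alpha(T_2)$, combined with $\alpha(T_i)\le 168$, bound $|G|\le 14112$. This reduces the classification to a finite search, and the genera recorded in the table are read off directly from $\alpha(T_i)+1$.

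First I would enumerate all triples $(m,T_1,T_2)$ with $T_1,T_2\in\mathcal N$ and $m=\frac{1}{2}\alpha(T_1)\alpha(T_2)$, up to the exchange of $T_1$ and $T_2$. Whenever one of the candidate genera satisfies $2\le g(C_i)\le 48$, I would discard the triple if $m$ exceeds the maximal order of $Aut(C_i)$ permitted by Breuer's table (and otherwise invoke the Hurwitz bound $84(g(C_i)-1)$). For each surviving triple I would search the appropriate database of groups of order $m$ for a group $G$ admitting spherical systems of generators of both types, and for each such $G$ count the orbits of $\mathfrak{T}$ on $\mathcal B(G,T_1)\times\mathcal B(G,T_2)$ among the disjoint pairs; this count is the number $n$ in the table. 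To make this feasible I would apply Lemma \ref{pen11}: compute a set $\mathfrak R_1$ of representatives of $\mathfrak{T}_1$ on $\mathcal B(G,T_1)$ and representatives of the Hurwitz action on $\mathcal B(G,T_2)$, intersect the stabilizer sets $\Sigma(A_1)\cap\Sigma(A_2)$, and bound $n$ from above and below by the criteria following Lemma \ref{pen11}.

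The hard part will be the group orders for which no exhaustive enumeration is available. For $m\ge 2001$ only perfect groups are tabulated, so after the database search I must rule out non-perfect groups of these orders carrying the required generators. Here I would argue as in subsection \ref{xtc2} with Lemma \ref{abeliani}: such a $G$ is a quotient of $\mathbb{T}(T_1)$ and $\mathbb{T}(T_2)$, so $G^{ab}$ is a quotient of the computable abelianizations and $G'$ a quotient of the derived group; iterating the identifications $\mathbb{T}(2,3,8)'\simeq\mathbb{T}(3,3,4)$ and $\mathbb{T}(3,3,4)'\simeq\mathbb{T}(4,4,4)$, together with Burnside's solvability theorem, one descends to an impossible quotient, such as the fact that no group of order $768$ is a quotient of $\mathbb{T}(4,4,4)$. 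The enormous orders $\{1920,1152,768,512,384,256\}$ I would handle in the same spirit, first computing the admissible abelianizations $G^{ab}$ from the types and discarding every group with the wrong abelianization before testing the survivors.

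Finally I would treat the memory-critical cases $T_1=[2^8]$ with $|G|\in\{168,96,48\}$, where $\mathcal B(G,[2^8])$ cannot be stored, by a direct analysis of conjugacy classes of involutions (subsection \ref{xtc1}): for $\langle 168,42\rangle$ the single class of involutions forces $\Sigma(A_1)\cap\Sigma(A_2)\ne\{1_G\}$; for $\langle 96,64\rangle$ and $\langle 48,48\rangle$ I would restrict the Hurwitz action to the subsets of generating tuples that avoid a fixed involution class, compute orbit representatives on those restricted sets, and test disjointness against the few Hurwitz classes of type $T_2$. The unique case with $T_1(S)=T_2(S)$ and $n\ge 2$, namely $G(128,36)$ with $T_i=[4^3]$, requires the extra verification of Remark \ref{exchange} that exchanging the two factors does not merge the two $\mathfrak{T}$-orbits. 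Assembling all the orbit counts then produces the table and, summing the column $n$, yields the total of $49$ families asserted in Theorem \ref{families}.
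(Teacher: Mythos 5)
Your proposal follows the paper's own proof essentially step for step: the same reduction via Riemann's existence theorem, Theorem \ref{condisogenous} and Lemma \ref{tupel} to a finite search with $|G| \leq 14112$, the same Breuer-table filtering and database searches, the same use of Lemma \ref{pen11} for orbit counting, the same abelianization descent (Lemma \ref{abeliani}, $\mathbb{T}(2,3,8)' \simeq \mathbb{T}(3,3,4)$, Burnside, the impossibility of order-$768$ quotients of $\mathbb{T}(4,4,4)$) for the large and intractable orders, the same involution-class analysis for the $[2^8]$ cases, and the same exchange check for $G(128,36)$. It is correct and matches the paper's argument; no further comparison is needed.
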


\bigskip
\bigskip

\bigskip
\bigskip

\noindent \textbf{Authors Address}:\\
Christian Glei\ss ner: Universit\"at Bayreuth, Lehrstuhl Mathematik VIII;\\
Universit\"atsstra\ss e 30, D-95447 Bayreuth, Germany\\
E-mail address: \url{christian.gleissner@uni-bayreuth.de}

\end{document}